\theoremstyle{definition}
\newtheorem{lem}{Lemma}[section]
\newtheorem{thm}[lem]{Theorem}
\newtheorem{cor}[lem]{Corollary}
\newtheorem{prop}[lem]{Proposition}
\newcommand{\rad}{\kappa}
\newcommand{\length}{\ell}
\newcommand{\boldh}{\mathbf{H}}
\newcommand{\boldk}{\mathbf{K}}
\newcommand{\boldl}{\mathbf{L}}
\newcommand{\boldv}{\mathbf{V}}
\newcommand{\boldc}{\mathbf{C}}
\newcommand{\boldp}{\mathbf{P}}
\newcommand{\boldx}{\mathbf{X}}
\newcommand{\boldy}{\mathbf{Y}}
\newcommand{\real}{\mathbb{R}}
\newcommand{\comp}{\mathbb{C}}
\newcommand{\disc}{\mathbb{D}}
\newcommand{\tor}{\mathcal{T}}
\newcommand{\grad}{\nabla}
\newcommand{\lap}{\Delta}
\newcommand{\uvel}{\mathbf{u}}
\newcommand{\vel}{\mathbf{v}}
\newcommand{\dis}{\mathbf{x}}
\newcommand{\traj}{\mathbf{y}}
\newcommand{\trajcomp}{y}
\newcommand{\bodyf}{\mathbf{f}}
\newcommand{\bodyg}{\mathbf{g}}
\newcommand{\eps}{\varepsilon}
\newcommand{\per}{\textrm{p}}
\begin{document}

\title{On a Linearized Problem Arising in the Navier-Stokes Flow of a Free Liquid Jet}
\author{Shaun Ceci$^1$\thanks{Corresponding author; electronic mail: {\tt cecisj@lemoyne.edu}} \, and Thomas Hagen$^2$\\ \\
$^1$Department of Mathematics, Le Moyne College\\
Syracuse, NY 13214-1399, USA\\
$^2$Department of Mathematical Sciences, The University of Memphis\\
Memphis, TN 38152-3240, USA}
\date{}
\maketitle

\begin{abstract} 
In this work, we analyze a Stokes problem arising in the study of the Navier-Stokes flow of a liquid jet. The analysis is accomplished by showing that the relevant Stokes operator accounting for a free surface  gives rise to a sectorial operator which generates an analytic semigroup of contractions. Estimates on  solutions are established using Fourier methods. The result presented is the key ingredient in a local existence and uniqueness proof for solutions of the full nonlinear problem.

{\bf AMS (MOS) Subject Classification}. 47D06, 35P05, 76D07

\vspace{2mm}
\end{abstract}

\section{Introduction}

In this paper, we are concerned with solutions of the modified Stokes problem
\begin{align}
	\label{eq:lin_prob_main} D_t \vel - \mu \lap \vel + \grad q &= \bodyf & & \mbox{on } (0,T) \times \Omega\\
	\label{eq:lin_prob_div}\grad \cdot \vel &= 0 & & \mbox{on } (0,T) \times \Omega\\
	\label{eq:lin_prob_init}\vel(0,\cdot) &= 0 & & \mbox{on }\Omega\\
	\label{eq:lin_prob_freesurf}\mathbf{S}(\vel,q) &= 0 & & \mbox{on } (0,T) \times S_F  \\
	\label{eq:lin_prob_per} D^m_3 q |_{\Gamma_{\length}} = D^m_3 q |_{\Gamma_0}, D^k_3 \vel |_{\Gamma_{\length}} &= D^k_3 \vel |_{\Gamma_0} && \textrm{for } 0 \leq m \leq s -2, \mbox{ } 0 \leq k \leq s -1
\end{align}
for suitable initial data and  sufficiently general body forces $\bodyf$. Here $\Omega$ denotes the set
\begin{equation}
\Omega = \disc \times (0,\length),
\end{equation}
where $\length>0$ and $\disc = \left \{ (a_1,a_2) \in \real^2: a_1^2+a_2^2< \rad^2 \right \}$ for some radius $\rad>0$. We are primarily interested in thin fluid filaments (i.e., where $\rad$ is small relative to the axial period $\length$) and hence we can assume $\rad<1$.
Throughout, Cartesian coordinates in $\real^n$ will be written in the form $(a_1,\ldots, a_n)$. 
 $S_F$ denotes the portion of $\partial \Omega$ corresponding to the cylinder surface, given by
\begin{equation}
	S_F = \left \{ (a_1, a_2, a_3)\in \partial \Omega: a_1^2+a_2^2= \rad^2 \right \},
\end{equation}
$\Gamma_0$ and $\Gamma_{\length}$ denote the opposing faces of $\partial \Omega$
\begin{equation}
	\Gamma_0 = \disc \times \{0\},\qquad \Gamma_\length = \disc\times \{ \length \},
\end{equation}
and $s \geq 2$. 
The quantity $\vel$ is the (Lagrangian) fluid velocity, $q$ is the (Lagrangian) fluid pressure, $\mathbf{n}$ is the outward unit normal to $\Omega$, and $\mu>0$ is the (constant) fluid viscosity. The moving free-boundary condition is abbreviated by $\mathbf{S}(\vel,q)  = 0$ on $S_F$, where
\begin{equation}
	\mathbf{S}(\vel,q) = \Big( qn_i - \mu \sum_{j=1}^3 (D_j v_i + D_i v_j)n_j \Big)_{i=1}^3.
\end{equation} 
Note that the conditions \eqref{eq:lin_prob_per} require that solutions be periodic of period $\length$ in the $a_3$-direction.
Our objective in this work is to show that this Stokes problem allows unique solutions for given initial data and arbitrary $T>0$.

We now briefly motivate how the linear problem \eqref{eq:lin_prob_main}--\eqref{eq:lin_prob_per} arises in the study of free fluid jets. We consider the three-dimensional motion of a jet bounded by an evolving free surface under isothermal conditions and without surface tension. The fluid is assumed to be viscous, homogeneous, incompressible, and Newtonian. To model the fluid jet, the three-dimensional incompressible Navier-Stokes equations are coupled with periodic boundary conditions in the axial direction as in \cite{Teramoto} and moving free-surface boundary conditions in the radial direction:
\begin{align}
	D_t \uvel + (\uvel \cdot \grad)\uvel - \mu \lap \uvel + \grad p &= g \, \mathbf{e}_3 & & \mbox{on }\Omega(t) \label{eq:euler1} \\
	\grad \cdot \uvel &= 0  & & \mbox{on }\Omega(t) \\
	(p I - \mu (\grad \uvel + \grad \uvel^T)) \cdot \mathbf{N} &= P_0 \mathbf{N} & &  \mbox{on }S_F(t) \\
 D^m_3 p |_{\Gamma_{\length}(t)} = D^m_3 p |_{\Gamma_0(t)}, D^k_3 \uvel |_{\Gamma_{\length}(t)} &= D^k_3 \uvel |_{\Gamma_0(t)} && \textrm{for } 0 \leq m \leq s -2, \mbox{ } 0 \leq k \leq s -1  \\
	D_t \traj (t,\cdot) &= \uvel (t, \traj(t,\cdot) ) & & \mbox{on }\Omega\\
	\uvel(0,\cdot) &= \uvel_0(\cdot)& & \mbox{on }\Omega\\
	\traj(0,\cdot) &= \mathbf{I}(\cdot)& & \mbox{on }\Omega. \label{eq:euler7}
\end{align}
In this Eulerian description $\uvel$ is the fluid velocity, $p$ is the fluid pressure, $\traj$ is the fluid parcel trajectory map, $P_0$ is the (constant)
ambient pressure, and $g$ is the acceleration due to gravity. At time $t$, the fluid domain, free surface, and periodic faces are given by $\Omega(t) = \traj(t,\Omega)$, $S_F(t) = \traj(t, S_F)$, $\Gamma_0 (t)= \traj(t,\Gamma_0)$, and $\Gamma_{\length} (t) = \traj(t, \Gamma_{\length})$, respectively. $\mathbf{N}$ is the outward unit normal to $\Omega(t)$ and $\mathbf{e}_3 = (0,0,1)^T$.

The periodic boundary condition is chosen because it leads to a simpler functional setting and avoids all axial boundary layer difficulties while retaining the primary mathematical challenges of the problem. In addition, the assumption of periodicity in the axial direction has been successfully used to study physical flow phenomena  in the numerical simulation of drop dynamics for viscoelastic fluid jets \cite{li_fontelos}. 

To obtain a fixed fluid domain, it is useful to shift to a Lagrangian specification of the flow field. The problem \eqref{eq:euler1}--\eqref{eq:euler7} then becomes
\begin{align}
	D_t v_i - \mu \sum_{j,k,m=1}^3 \lambda_{j,k} D_k(\lambda_{j,m} D_m v_i) + \sum_{k=1}^3 \lambda_{i,k} D_k q &= g \delta_{3,i} & & \mbox{for }i \in \{1,2,3\} \mbox{ on } \Omega \label{eq:lag1}\\
	\sum_{j,k=1}^3 \lambda_{j,k} D_k v_j &= 0 & & \mbox{on } \Omega \\
	q n_i - \mu \sum_{j,k=1}^3 (\lambda_{j,k} D_k v_i + \lambda_{i,k} D_k v_j) n_j &= 0 & &\mbox{for } i \in \{1,2,3\} \mbox{ on } S_F 
\end{align}
\begin{align}
D^m_3 q |_{\Gamma_{\length}} = D^m_3 q |_{\Gamma_0}, D^k_3 \vel |_{\Gamma_{\length}} = D^k_3 \vel |_{\Gamma_0}, D^k_3 \dis |_{\Gamma_{\length}} &= D^k_3 \dis |_{\Gamma_0} && \textrm{for } 0 \leq m \leq s -2, \mbox{ } 0 \leq k \leq s -1  \\
 	D_t \dis &= \vel & & \mbox{on } \Omega \\
	\vel(0,\cdot) &= \uvel_0(\cdot)& & \mbox{on }\Omega \\
	\dis(0,\cdot) &= 0 & & \mbox{on }\Omega . \label{eq:lag7}
\end{align}
Here $\vel$ is the Lagrangian fluid velocity, $q $ is the difference between the Lagrangian fluid pressure and the ambient pressure, $\dis = \traj - \mathbf{I}$ is the fluid parcel displacement map, and $\delta_{i,j}$ denotes the Kronecker delta. One consequence of converting the governing equations to the Lagrangian specification is the introduction of {\em a priori} unknown quantities involving derivatives of the trajectory map $\traj$, which we denote by $\lambda_{i,j}(t,\mathbf{a}): \Omega \rightarrow \real$ where
\begin{equation}
	 \Lambda = \begin{pmatrix} \lambda_{i,j} \end{pmatrix} = (\grad \traj )^{-1} = \begin{pmatrix} D_1 \trajcomp_1 & D_1 \trajcomp_2 & D_1 \trajcomp_3 \\ D_2 \trajcomp_1 & D_2 \trajcomp_2 & D_2 \trajcomp_3 \\ D_3 \trajcomp_1 & D_3 \trajcomp_2 & D_3 \trajcomp_3 \end{pmatrix}^{-1}. 
\end{equation}
It readily follows from a continuity argument that $\dis \approx 0$ for $t\ll 1$ so that $\Lambda$ is approximately equal to the $3 \times 3$ identity matrix for small times $t$. Taking $\lambda_{i,j} = \delta_{i,j}$ in \eqref{eq:lag1}--\eqref{eq:lag7} we obtain, with the exception of the initial data, the linearized Stokes problem \eqref{eq:lin_prob_main}--\eqref{eq:lin_prob_per}. Details of the Lagrangian coordinate change for closely related problems are given by Beale \cite{beale1} and Teramoto \cite{Teramoto}.

To obtain a local-in-time solution for the full nonlinear problem \eqref{eq:lag1}--\eqref{eq:lag7}, a fixed point approach can be used, which requires unique solvability of a slightly more general version of the linearized problem discussed here. That general case, however, can be shown to reduce to the one treated in this paper. This overarching strategy of studying a modified Stokes problem in Lagrangian coordinates and its use in a fixed point argument was championed  by Beale in \cite{beale1} for a semi-infinite ``ocean'' of fluid having a free upper surface and fixed bottom. Teramoto subsequently adapted Beale's techniques to gain similar results for a free surface problem involving axisymmetric flow down the exterior of a solid vertical column of sufficiently large radius \cite{Teramoto}. 

It is important, however, to note that while the fluid jet appears similar to the problem considered in \cite{Teramoto}, there are key differences which require that we build upon the work done by Beale and Teramoto. For example, unlike the fluid domains under consideration in \cite{allain1,allain2,beale1,beale2,Teramoto}, there is no stationary surface opposite the free surface to which a Dirichlet boundary condition can be assigned. Foremost among the consequences of not having such a condition are the loss of general applicability of the Poincar\'{e} inequality and the loss of invertibility of the modified Stokes operator, which is central to the analysis. Moreover, where Teramoto is able to exploit axisymmetry and cylindrical coordinates to reduce his problem to two dimensions, the same approach introduces significant challenges in the fluid jet case since the Navier-Stokes equations in cylindrical coordinates have singular coefficients when the axis at $r=0$ is contained in the fluid domain. 

Also in contrast to \cite{allain1,allain2,beale1,beale2,Teramoto}, we utilize an elegant semigroup approach to the linearized problem. This has the benefit of immediately providing a solution to the abstract Cauchy problem associated with the problem \eqref{eq:lag1}--\eqref{eq:lag7}. We improve upon the spectral analysis of the corresponding operator $-A$ found in \cite{beale1} and show that it is, in this setting, a sectorial operator which generates an analytic semigroup of contractions. Additionally, we are able to establish explicit characterizations for both spaces in the modified Helmholtz decomposition of $(L^2(\Omega))^3$. 

This paper is organized as follows: in Section \ref{sec:prelim}, we introduce the setting of the problem and present some preliminary lemmas which adapt and extend standard results from \cite{beale1}  to fit this setting; in Section \ref{sec:spec}, we derive the relevant abstract Cauchy problem and restrict the spectrum of the underlying differential linear operator $A$ to a sector in the right half of the plane as well as provide estimates on the resolvent operator of $A$; in Section \ref{sec:semi}, we establish that $-A$ is the infinitesimal generator of an analytic semigroup of contractions and use this solve the linearized problem \eqref{eq:lin_prob_main}--\eqref{eq:lin_prob_per}.

\section{State Spaces and Estimates} \label{sec:prelim}
\setcounter{equation}{0}

To analyze equations \eqref{eq:lin_prob_main}--\eqref{eq:lin_prob_per}, we take as our initial fluid domain (the space occupied by the fluid at $t=0$) the infinite cylinder along the $a_3$-axis, 
\begin{equation}
\Omega_{\infty} = \left \{ (a_1,a_2,a_3) \in \real^3: a_1^2+a_2^2< \rad^2 \right \},
\end{equation} 
with free surface $\partial \Omega_{\infty} = \left \{ (a_1,a_2,a_3) \in \real^3: a_1^2+a_2^2=\rad^2 \right \}$. We restrict our attention to flow which is periodic in the $a_3$ direction, hence we are interested primarily in functions of the form
\begin{equation}
f = \sum_n \hat{f}_n(a_1,a_2) e^{2\pi i n a_3/ \length }  \in H^k_{\textrm{loc}}(\Omega_{\infty}) = W^{k,2}_{\textrm{loc}}(\Omega_{\infty}),
\end{equation} 
with $\hat{f}_n \in H^k(\disc)$. In practice however, we will find it more convenient to work with functions over a single period. It is natural then to interpret $a_3$-periodic functions on $\Omega_{\infty}$ as being defined on a solid torus $\tor \subset \real^3$. It should be clear that $H^k(\tor)$ is smoothly isomorphic to the space of functions of interest.

While $\tor$ is a natural choice for the domain given the periodic setting, we prefer to work in the physical space occupied by $\Omega_{\infty}$. To this end, we notice that there is a $C^{\infty}$ diffeomorphism from one period of $\Omega_{\infty}$ onto $\tor$ and consider the bounded set 
$\Omega = \disc \times (0,\length)$ with boundary $\partial \Omega = S_F \cup \Gamma_0 \cup \Gamma_{\length}$.
While the use of $\Omega$ in place of $\Omega_{\infty}$ does give rise to minor technical issues (as opposed to $\tor$) concerning the regularity of functions as one approaches the ``artificial'' corners in the boundaries, most of these problems can be dealt with by temporarily exchanging $\Omega$ for a larger subset of $\Omega_{\infty}$. As such we will occasionally find a use for the set $\Omega_1 = \disc \times (-\length, \length)$.

 Given a spatial domain $U \subset \real^3$ and a time interval $I \subset \real$, the following notational conventions are adopted for arbitrary function spaces $X(U)$ and $Y(I \times U)$:
\begin{align}
&\boldx(U) = (X(U))^3, && \boldy(I \times U) = (Y(I \times U))^3, \\
&\boldx_{\sigma}(U) = \left \{ \uvel \in \boldx(U) : \grad \cdot \uvel = 0 \right \}, && \boldy_{\sigma}(I \times U) = \left \{ \uvel \in \boldy(I \times U) : \grad \cdot \uvel = 0 \right \}, \\
&X = X(\Omega), && Y = Y( (0,T) \times \Omega), \\
&{}^0 X = \left \{ u \in X : u = 0 \mbox{ on } S_F \right \}, && {}^0 Y = \left \{ u \in Y : u = 0 \mbox{ on } S_F \right \}. 
\end{align}
Here the vector and tensor fields are equipped with the Euclidean and Frobenius norms, respectively. To keep the notation simple, we use the following rule: If a function space already has a subscript, its divergence-free subspace will be denoted by appending a $\sigma$ to the existing subscript. Our notation is thus closely aligned to the one chosen by Beale in \cite{beale1}. Spaces not following these conventions will be explicitly defined in each instance. We now introduce the spaces fundamental to this text; though each assumes $\Omega$ as its spatial domain, the extension to $\Omega_1$ is obvious. For the set of functions on $\Omega$ whose $a_3$-periodic extensions are continuously differentiable and bounded on $\Omega_{\infty}$ we simply take $C^k_{\per} = \left \{ u|_{\Omega}: u = \sum_{n=-\infty}^{\infty} \hat{u}_n(a_1,a_2) e^{2 \pi i n a_3/ \length } \in C^k \left( \overline{\Omega_{\infty}} \right) \right \}$. Note that $\hat{u}_n \in C^k(\disc)$ necessarily. Similarly, we define $C^{\infty}_{\per}$ (or $C^{k,\alpha}_{\per}$) to be the set of all such functions which are bounded and smooth (or H\"{o}lder continuous with exponent $\alpha$) on $\Omega_{\infty}$. It is clear that the following space is isomorphic to $H^k(\tor)$:
\begin{equation}
	H_{\per}^k = \displaystyle \left \{ u= \sum_{n=-\infty}^{\infty} \hat{u}_n(a_1,a_2) e^{2 \pi i n a_3/ \length } \in H^k(\Omega): \hat{u}_n \in H^k(\disc) \mbox{ and } (u,u)_{H_{\per}^k} < \infty \right \}
\end{equation}
for  $k \in \mathbb{N}_0$, where 
\begin{equation}
	( u,v )_{H_{\per}^k} = \displaystyle \sum_{n=-\infty}^{\infty} \hspace*{.4em} \sum_{m=0}^{k} \frac{(2 \pi n)^{2m}}{\length^{2m-1}} ( \hat{u}_n, \hat{v}_n )_{H^{k-m}(\disc)} \qquad \mbox{and} \qquad \| u \|_{H^k_{\per}} = \sqrt{(u,u)_{H^k_{\per}}}. 
\end{equation}
The norms $\| \cdot \|_{H^k_{\per}}$ and $\| \cdot \|_{H^k}$ are equivalent norms on $H^k_{\per}$ which are actually equal for $k \in \{ 0,1 \}$. Here $\| \cdot \|_{H^k}$ denotes the standard norm on $H^k(\Omega)$. It then readily follows that $H^0_{\per} = L^2$ and
\begin{equation} \label{eq:Hkp_char}
H^k_{\per} = \{ f \in H^k: D_3^j f|_{\Gamma_{\length}} = D_3^j f|_{\Gamma_0} \mbox{ for all } 0 \leq j \leq k-1\}
\end{equation}
for $k \geq 1$. We define $H_{\per}^s$, $s \in \real^+$, using complex interpolation. Note that, throughout the text, we typically use $r$ and $s$ to denote non-integer regularity and $k$ and $m$ when we restrict ourselves to integer regularity. 

Instead of the standard Helmholtz decomposition of $\boldl^2$, we take our lead from \cite{beale1} and pursue something slightly different. In particular, to incorporate $a_3$-periodicity along with the divergence-free condition into the auxiliary space, we choose our decomposition so that $\boldl^2$ can be projected onto $ \boldh^0_{\per \sigma}$. For convenience, we set $\boldp^s = \boldh^s_{\per \sigma}$ and introduce the space
\begin{equation}
\boldv^s = \{ \vel \in \boldp^s : \mathbf{S}_{\textrm{tan}} (\vel) = 0 \mbox{ on } S_F \},
\end{equation}
where $\mathbf{S}_{\textrm{tan}} = \mathbf{S} - (\mathbf{S} \cdot \mathbf{n}) \mathbf{n} $ is the \emph{tangential part} of $\mathbf{S}$. Finally, to incorporate regularity with respect to time we define the space
\begin{equation}
K_{\per}^s(I \times \Omega) = H^{s/2} (I ; H^0_{\per}) \cap H^0(I;H^s_{\per}).
\end{equation}

In contrast to \cite{beale1}, we now provide an explicit characterization of the orthogonal complement $(\boldp^0)^{\perp}$ arising in our Helmholtz decomposition of $\boldl^2$. This result will prove important later on.

\begin{prop} \label{prop:orthog} The orthogonal complement of $\boldp^0$ in $\boldl^2$ has the characterization $(\boldp^0)^{\perp} = \{ \grad q : q \in {}^0 H^1_{\per} \}$. 
\end{prop}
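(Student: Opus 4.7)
The plan is to establish the two inclusions of the claimed equality separately. For the easier ``$\supseteq$'' direction, fix $q \in {}^0 H^1_\per$ and an arbitrary $\vel \in \boldp^0$; the goal is $(\vel, \grad q)_{\boldl^2} = 0$. Since $\vel$ sits only in $\boldl^2$, I invoke the weak-form divergence theorem: the distributional identity $\grad \cdot \vel = 0$ endows the normal trace $\vel \cdot \mathbf{n}$ with meaning in $H^{-1/2}(\partial \Omega)$ and yields
\[
(\vel, \grad q)_{\boldl^2} = \langle \vel \cdot \mathbf{n},\, q \rangle_{\partial \Omega}.
\]
Splitting the boundary pairing across $S_F$, $\Gamma_0$, and $\Gamma_\length$, the $S_F$-piece vanishes because $q|_{S_F} = 0$, while the two flat-face contributions cancel by $a_3$-periodicity of $q$ combined with the matching of normal traces across $\Gamma_0$ and $\Gamma_\length$ forced by $\vel \in \boldp^0$ (the Fourier-series definition of $\boldh^0_\per$ encodes precisely this periodic identification on the torus).

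For the harder ``$\subseteq$'' direction, let $\wvel \in (\boldp^0)^\perp$. Every divergence-free element of $C^\infty_c(\Omega)^3$ lies in $\boldp^0$ (periodicity is automatic for compactly supported fields), so the classical de~Rham lemma furnishes a distribution $q$ with $\wvel = \grad q$, and the Ne\v{c}as inequality on the Lipschitz domain $\Omega$ promotes this $q$ to $H^1(\Omega)$, unique up to an additive constant. To pin down the boundary behavior of $q$, I test $\wvel = \grad q$ against smooth divergence-free, $a_3$-periodic fields $\vel \in \boldp^0$ with prescribed normal-trace data; integration by parts produces
\[
0 = (\grad q, \vel)_{\boldl^2} = \int_\disc \bigl[q(\cdot, \length) - q(\cdot, 0)\bigr] v_3(\cdot, 0)\, dA + \int_{S_F} q\,(\vel \cdot \mathbf{n})\, dA.
\]
Specializing to $\vel = (0,0,f(a_1,a_2))$ with arbitrary $f$ on $\disc$ isolates the first integral and forces $q(\cdot,\length) = q(\cdot,0)$ almost everywhere, so $q \in H^1_\per$; specializing instead to $\vel$ with $v_3 \equiv 0$ on $\Gamma_0, \Gamma_\length$ and essentially arbitrary mean-zero normal flux on $S_F$ isolates the second integral and forces $q|_{S_F}$ to be orthogonal in $L^2(S_F)$ to every mean-zero function, whence $q|_{S_F}$ is constant. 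Subtracting this constant yields the desired representative in ${}^0 H^1_\per$.

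I expect the main obstacle to be the construction of the second family of test fields above: smooth divergence-free, $a_3$-periodic $\vel$ whose third component vanishes on $\Gamma_0$ and $\Gamma_\length$ and whose normal trace on $S_F$ exhausts a dense subset of the mean-zero functions in $L^2(S_F)$. A concrete approach is to solve a two-dimensional stream-function problem on $\disc$ for each $a_3$-Fourier mode (feasible because $\partial \disc$ is connected and the flux has mean zero) and then reassemble via an $\length$-periodic series in $a_3$; density of trigonometric polynomials in the $a_3$-direction combined with the standard cross-sectional surjectivity should deliver enough test functions to extract the required pointwise conclusion on $q|_{S_F}$.
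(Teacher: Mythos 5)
Your argument follows a genuinely different route from the paper's. You establish the two set inclusions head-on, invoking the de~Rham lemma and Ne\v{c}as inequality for the ``$\subseteq$'' direction and the $H(\mathrm{div})$ normal trace pairing for ``$\supseteq$''. The paper never argues either inclusion directly: it introduces $\boldy = \{\grad q : q\in {}^0 H^1_\per\}$, proves $\boldy$ is \emph{closed} in $\boldl^2$ by a Fourier-mode convergence analysis (using a Poincar\'e estimate on the $n=0$ mode), and proves $\boldy^\perp = \boldp^0$, whence $(\boldp^0)^\perp = (\boldy^\perp)^\perp = \boldy$ is immediate from the duality $X = (X^\perp)^\perp$ for closed $X$. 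The paper's route buys you something concrete: it entirely sidesteps what you yourself flag as the main obstacle, namely manufacturing divergence-free periodic test fields whose normal trace on $S_F$ sweeps a dense subset of the mean-zero functions. Your stream-function sketch for that construction is plausible once reassembled over $a_3$-Fourier modes, but it is real work; the closure argument avoids it. One point in your ``$\supseteq$'' step needs to be fixed: you attribute the cancellation of the $\Gamma_0$, $\Gamma_\length$ boundary terms to ``the Fourier-series definition of $\boldh^0_\per$'', but $\boldh^0_\per = \boldl^2$, and the $L^2$ structure alone imposes no trace matching whatsoever. What actually produces the periodic normal trace is reading the divergence-free constraint in $\boldp^0 = \boldh^0_{\per\sigma}$ on the torus $\tor$ (equivalently, demanding both $\grad\cdot\vel = 0$ in $\mathcal{D}'(\Omega)$ \emph{and} $v_3|_{\Gamma_0} = v_3|_{\Gamma_\length}$ in $H^{-1/2}(\disc)$). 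With the weaker reading of $\boldp^0$ as merely $\{\vel\in\boldl^2 : \grad\cdot\vel = 0 \text{ in } \Omega\}$ the inclusion you want is simply false: $\vel = (-2a_1 a_3, 0, a_3^2)$ is divergence-free in $\Omega$ yet $\bigl(\grad(\rad^2 - a_1^2 - a_2^2),\, \vel\bigr)_{\boldl^2} = \tfrac{1}{2}\pi\rad^4\length^2 \neq 0$. State the torus reading explicitly and your argument closes.
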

\begin{proof}
Let $\boldy = \{ \grad q : q \in {}^0 H^1_{\per} \}$. It is sufficient to show two things: (i) $\boldy$ is closed in $\boldl^2$ so that $\boldy = (\boldy^{\perp})^{ \perp}$, and (ii) $\boldp^0 = \boldy^{\perp}$. In order to prove (i), we will first need to show that the orthogonal complement of $\boldx = \overline{ {}^0 \boldc^{\infty}_{{\per} \sigma}}^{\| \cdot \|_{\boldl^2}}$ in $\boldl^2$ has the characterization $\boldx^{\perp} = \{ \grad q : q \in H^1_{\per} \}$. Let $q \in H^1_{\per}, \uvel \in \boldx$. There exist $\uvel_k \in {}^0 \boldc^{\infty}_{\per \sigma}$ such that $\uvel_k \rightarrow \uvel$ in $\boldl^2$. Integration by parts yields
   	\begin{equation}
   		(\grad q, \uvel)_{\boldl^2} = \lim_{k \rightarrow \infty} (\grad q, \uvel_k)_{\boldl^2} = \lim_{k \rightarrow \infty} \int_{\Gamma_{\length}} \! q \overline{ \uvel_k } \cdot \mathbf{e}_3 + \int_{\Gamma_0} \! q \overline{ \uvel_k } \cdot (-\mathbf{e}_3) = 0.
	\end{equation}
	Thus $\grad q \in \boldx^{\perp}$. Conversely, let $\mathbf{w} \in \boldx^\perp$. Then, in particular, $(\mathbf{w},\uvel)_{\boldl^2} = 0$ for all $\uvel\in \boldc^{\infty}_{\textrm{c} \sigma}$. Thus there exists $p \in H^1$ such that $\mathbf{w} = \grad p$, see \cite[pp. 10--11]{temam2}. Now consider
	\begin{equation}
		\uvel = \begin{pmatrix} 0 \\ 0 \\ u(a_1,a_2) \end{pmatrix} \in {}^0 \boldc^{\infty}_{\per \sigma},
	\end{equation}
	where $ u \in C^{\infty}_{\textrm{c}} ( \disc )$ is arbitrary. Then, applying integration by parts, we obtain
	\begin{align}
		0 = (\mathbf{w}, \uvel)_{\boldl^2}
		&= ( \grad p , \uvel )_{\boldl^2} \\
		&= \int_{\Gamma_{\length}} \! p \overline{ \uvel } \cdot \mathbf{e}_3 + \int_{\Gamma_0} \! p \overline{ \uvel } \cdot (-\mathbf{e}_3) \\
		&= \int_{\Gamma_{\length}} \! p \overline{ u } - \int_{\Gamma_0} \! p \overline{ u } \\
		&= \left(  p|_{\Gamma_{\length}} - p|_{\Gamma_0} , u \right)_{L^2(\disc)}.
	\end{align}
	Since $u$ is an arbitrary element of a dense subset of $L^2(\disc)$ (see \cite[p. 13]{mazja}), this implies that $p|_{\Gamma_{\length}} = p|_{\Gamma_0} $ on $L^2(\disc)$. Hence $p \in H^1_{\per}$ by \eqref{eq:Hkp_char}.
	
With this characterization in hand, we can now prove (i). Let $q_k \in {}^0 H^1_{\per}$ such that $\grad q_k \rightarrow \bodyf \in \boldl^2$. Since $\grad q_k \in \boldx^{\perp}$, we have
\begin{equation}(\bodyf, \uvel)_{\boldl^2} = \lim_{k \rightarrow \infty} (\grad q_k, \uvel)_{\boldl^2} = 0
\end{equation}
for all $\uvel \in \boldx$. Hence $\bodyf \in \boldx^{\perp}$ and so there exists $p \in H^1_{\per}$ such that $\bodyf = \grad p$. Notice that for $n \neq 0$
\begin{align}\| (\hat{q}_k)_n - \hat{p}_n \|^2_{H^1(\disc)} &\leq \length^2 \sum_n \left( \left( \frac{2 \pi n}{\length} \right)^2 \| (\hat{q}_k)_n - \hat{p}_n \|^2_{H^1(\disc)} + \sum_{j=1}^2 \| D_j( (\hat{q}_k)_n - \hat{p}_n) \|^2_{L^2(\disc)} \right)\\
&\leq \length^2 \| \grad (q_k - p) \|^2_{\boldl^2}.
\end{align}
Thus $(\hat{q}_k)_n \rightarrow \hat{p}_n$ in $H^1(\disc)$. Since $(\hat{q}_k)_n \in H^1_0(\disc)$, a closed subspace of $H^1(\disc)$, we obtain $\hat{p}_n \in H^1_0(\disc)$ for $n \neq 0$. For $n = 0$, applying the standard Poincar\'{e} inequality yields a constant $C>0$ such that 
\begin{equation}\| (\hat{q}_k)_0 - (\hat{q}_m)_0 \|^2_{H^1(\disc)} \leq C \| \grad((\hat{q}_k)_0 - (\hat{q}_m)_0) \|^2_{(L^2(\disc))^2} \leq C \length^2 \| \grad (q_k - q_m) \|^2_{\boldl^2}
\end{equation}
which implies that $(\hat{q}_k)_0$ converges in $H^1_0(\disc)$. Moreover, the limit is necessarily $\hat{p}_0+ \lambda$, for some $\lambda \in \real$, since it is readily seen that $(\hat{q}_k)_0$ converges to this limit in the weaker $L^2$-norm. Thus $\bodyf = \grad q$ where $q = p + \lambda \in {}^0 H^1_{\per}$. Hence $\boldy$ is closed in $\boldl^2$.

Finally, we show (ii). Let $\uvel \in \boldy^{\perp}$ and $\varphi \in C^{\infty}_{\textrm{c}}$. Then
\begin{equation}0 = (\grad \varphi, \uvel)_{\boldl^2} = - \int_{\Omega} \varphi (\overline{\grad \cdot \uvel}) .
\end{equation}
Hence $\grad \cdot \uvel$ acts as a bounded linear functional on $C^{\infty}_{\textrm{c}}$ and can be extended to all of $L^2$ by density. This unique operator must be the zero functional and thus $\uvel \in \boldp^0$. Conversely, let $\vel \in \boldp^0$. Since $\boldl^2 = \boldy \oplus \boldy^{\perp}$, there are $q \in {}^0 H^1_{\per}$ and $\tilde{\vel} \in \boldy^{\perp}$ such that $\vel = \tilde{\vel} + \grad q$. Taking the divergence of both sides of this equation yields $\lap q = 0$ and, by Lax-Milgram, $q$ must be the unique solution of this equation in ${}^0 H^1_{\per}$. Thus $q=0$ and $\vel = \tilde{\vel} \in \boldy^{\perp}$. Thus $\boldp^0 = \boldy^{\perp}$ and the claim follows.

\end{proof}

The following are analogous to results in \cite{beale1} and are readily shown using similar techniques. They are given here explicitly for the reader's covenience.
\begin{prop}  \label{prop:P} Let $P$ be the orthogonal projection of $\boldl^2$ onto $\boldp^0$. 
\begin{enumerate}[(1)]
\item For $s \geq 0$, we have $P\boldh^s_{\per} = \boldp^s$ and $P|_{\boldh^s_{\per}} : \boldh^s_{\per} \rightarrow \boldp^s$ is bounded.
\item $P|_{\boldk^s_{\per}} : \boldk^s_{\per} \rightarrow \boldk^s_{\per}$ is bounded with norm bounded independent of $T$.
\item Suppose $s \geq 1 $. If $f \in H^s_{\per}$, then there is a unique $\tilde{f} \in H^s_{\per}$ such that 
\begin{equation}
P(\grad f) = \grad \tilde{f}, \qquad f |_{S_F}  = \tilde{f} |_{S_F}, \qquad \mbox{and} \qquad \lap \tilde{f} = 0 .
\end{equation}
\end{enumerate}
\end{prop}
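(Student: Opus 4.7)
The unifying device is the explicit characterization $(\boldp^0)^\perp = \{\grad q : q \in {}^0 H^1_{\per}\}$ established in Proposition \ref{prop:orthog}: for any $\uvel \in \boldl^2$ we may write $\uvel = P\uvel + \grad q_\uvel$ with a uniquely determined $q_\uvel \in {}^0 H^1_{\per}$, and taking the divergence shows that $q_\uvel$ weakly solves the mixed problem $\lap q_\uvel = \grad \cdot \uvel$ with $q_\uvel|_{S_F}=0$ and $a_3$-periodicity on $\Gamma_0, \Gamma_\length$. All three items will follow by combining this decomposition with elliptic regularity for the Laplacian under a Dirichlet condition on $S_F$ and periodic identification across $\Gamma_0$ and $\Gamma_\length$; this regularity holds as on a smooth domain since the periodic identification turns the apparent edges $S_F \cap \Gamma_0$ and $S_F \cap \Gamma_\length$ into smooth points of the solid torus $\tor$.

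For (1), if $\uvel \in \boldh^s_{\per}$ with $s \geq 0$, then $\grad \cdot \uvel \in H^{s-1}_{\per}$ (interpreted as a negative-order space for $s<1$), so the elliptic estimate yields $q_\uvel \in H^{s+1}_{\per}$ with $\|q_\uvel\|_{H^{s+1}_{\per}} \leq C\|\uvel\|_{\boldh^s_{\per}}$; hence $P\uvel = \uvel - \grad q_\uvel$ lies in $\boldp^0 \cap \boldh^s_{\per} = \boldp^s$ with a comparable norm, while the reverse inclusion $\boldp^s \subseteq P\boldh^s_{\per}$ is trivial because $P$ fixes $\boldp^s$. For (2), the key observation is that $P$ acts only on the spatial variables, so $(P\vel)(t) = P(\vel(t))$ for almost every $t$; the boundedness of $P$ on $\boldh^s_{\per}$ from part (1) together with the fact that $P$ is a contraction on $\boldh^0_{\per}$, applied pointwise in $t$ and then squared and integrated, controls each of the two factors of the $\boldk^s_{\per}$-norm of $P\vel$ by the corresponding factor of the norm of $\vel$, with a constant depending only on the part (1) bound and hence independent of $T$.

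For (3), I apply the same decomposition to $\grad f \in \boldh^{s-1}_{\per}$: writing $\grad f = P(\grad f) + \grad q$ with $q \in {}^0 H^1_{\per}$, set $\tilde f = f - q$. Then $\grad \tilde f = P(\grad f)$ by construction, $\tilde f|_{S_F} = f|_{S_F}$ because $q|_{S_F}=0$, and $\lap \tilde f = \grad \cdot P(\grad f) = 0$ weakly since $P(\grad f) \in \boldp^0$ is weakly divergence-free. To promote $\tilde f$ to $H^s_{\per}$, note that $q$ solves $\lap q = \lap f \in H^{s-2}_{\per}$ with $q|_{S_F}=0$, and the same elliptic regularity gives $q \in H^s_{\per}$, whence $\tilde f \in H^s_{\per}$. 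Uniqueness is immediate: the difference of two candidates has vanishing gradient and vanishing trace on $S_F$ and therefore vanishes on the connected domain $\Omega$. The principal technical obstacle across all three parts is the elliptic regularity estimate $\|q\|_{H^{s+1}_{\per}} \leq C\|\lap q\|_{H^{s-1}_{\per}}$ at noninteger $s$ in the periodic setting; this is handled either by transferring the problem to the smooth-boundary manifold $\tor$ via the $C^\infty$ diffeomorphism and returning the estimates, or by complex interpolation between integer $s$, consistently with how $H^s_{\per}$ was defined.
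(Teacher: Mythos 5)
Your proof is correct and follows the approach the paper gestures at (it cites Beale rather than spelling out a proof): decompose $\uvel = P\uvel + \grad q_\uvel$ with $q_\uvel \in {}^0 H^1_{\per}$ via Proposition \ref{prop:orthog}, observe that $q_\uvel$ solves the mixed Dirichlet/periodic Laplace problem of \eqref{eq:lap_equ}, and invoke the elliptic regularity of Proposition \ref{prop:uniquesoln} plus interpolation. The only point worth flagging — which you already address — is that Proposition \ref{prop:uniquesoln} as stated covers only $s\geq 2$ for the right-hand side in $H^{s-2}_{\per}$, so the cases $0<s<1$ in (1) (and $1\leq s<2$ in (3)) require either passing to $\tor$ or interpolating between integer levels; since the paper defines $H^s_{\per}$ for noninteger $s$ by complex interpolation, the interpolation route is the cleaner fit.
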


%\begin{proof}
%(i) follows using standard methods such as those in \cite{temam2}. (ii), (iii) are analogous to results in \cite{beale1}.
%\end{proof}

We will see that, just as in \cite{beale1}, many crucial quantities can be cast as solutions of a particular problem involving Laplace's equation. Adapting the boundary conditions to reflect periodicity and the absence of a fixed bottom surface, the relevant problem in our setting takes the form
\begin{equation}
\lap u = f \quad \mbox{in } \Omega, \qquad \qquad u = 0 \quad \mbox{on } S_F, \qquad \qquad  D^k_3 u |_{\Gamma_{\length}} = D^k_3 u |_{\Gamma_0} \quad \textrm{for } k \in \{ 0 ,1 \}, \label{eq:lap_equ}
\end{equation}
where $f \in H^{s-2}_{\per}$ is given. The following result demonstrates that \eqref{eq:lap_equ} has a unique solution and provides an estimate for it in terms of the inhomogeneity $f$. We note that the provided proof does not draw from the corresponding proof in \cite{beale1}.

\begin{prop} \label{prop:uniquesoln} For $f \in H^{s-2}_{\per}, s \geq 2$, there is a unique solution $u \in {}^0 H^s_{\per}$ of $\lap u = f$ on $\Omega$. Additionally, there exists $C>0$, independent of $f$, such that $\| u \|_{H^s_{\per}} \leq C \| f \|_{H^{s-2}_{\per}}$.
\end{prop}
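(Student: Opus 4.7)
The natural approach is to exploit the periodic structure by expanding in Fourier series along the $a_3$-direction. Writing $u = \sum_n \hat{u}_n(a_1,a_2) e^{2\pi i n a_3/\length}$ and analogously for $f$, the equation $\lap u = f$ together with the Dirichlet condition $u|_{S_F} = 0$ decouples into the family of two-dimensional problems
\begin{equation}
\lap_2 \hat{u}_n - \alpha_n^2 \hat{u}_n = \hat{f}_n \mbox{ on } \disc, \qquad \hat{u}_n = 0 \mbox{ on } \partial \disc,
\end{equation}
where $\alpha_n = 2\pi n/\length$ and $\lap_2$ denotes the Laplacian in the $(a_1,a_2)$ variables. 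The periodic boundary conditions in \eqref{eq:lap_equ} are automatic from this representation, so it suffices to solve each modal problem and reassemble the series.

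For fixed $n$, existence and uniqueness of $\hat{u}_n \in H^1_0(\disc)$ follow from Lax-Milgram applied to the bilinear form $a_n(u,v) = (\grad u, \grad v)_{L^2(\disc)} + \alpha_n^2 (u,v)_{L^2(\disc)}$. When $n \neq 0$ the mass term directly supplies $H^1$-coercivity; when $n = 0$ the classical Poincar\'{e} inequality on $\disc$ (valid because of the Dirichlet data on $\partial \disc$) restores coercivity of the Dirichlet form. Standard elliptic regularity on the smooth domain $\disc$ then upgrades this to $\hat{u}_n \in H^k(\disc) \cap H^1_0(\disc)$ whenever $\hat{f}_n \in H^{k-2}(\disc)$, together with the Poisson-type estimate
\begin{equation}
\|\hat{u}_n\|_{H^k(\disc)} \leq C\bigl( \|\hat{f}_n\|_{H^{k-2}(\disc)} + \alpha_n^2 \|\hat{u}_n\|_{H^{k-2}(\disc)} \bigr),
\end{equation}
where $C$ depends only on $k$ and $\disc$.

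To recover the weighted norm $\|\cdot\|_{H^s_{\per}}$, I must carefully track the $\alpha_n$-dependence across modes. Testing the modal equation against $\hat{u}_n$ and integrating by parts yields $\|\grad \hat{u}_n\|^2_{L^2(\disc)} + \alpha_n^2 \|\hat{u}_n\|^2_{L^2(\disc)} = -(\hat{f}_n, \hat{u}_n)_{L^2(\disc)}$, from which $\alpha_n^2 \|\hat{u}_n\|_{L^2(\disc)} \leq \|\hat{f}_n\|_{L^2(\disc)}$ for $n \neq 0$, while the Poincar\'{e} inequality handles $n = 0$ separately. Iterating the elliptic regularity estimate then produces, for integer $s \geq 2$, a weighted bound of the form
\begin{equation}
\sum_{m=0}^s \alpha_n^{2m}\|\hat{u}_n\|^2_{H^{s-m}(\disc)} \leq C \sum_{m=0}^{s-2} \alpha_n^{2m}\|\hat{f}_n\|^2_{H^{s-2-m}(\disc)}
\end{equation}
with $C$ independent of $n$. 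Summing in $n$ reproduces, up to the fixed factor $\length$, the series defining $\|u\|_{H^s_{\per}}$ and $\|f\|_{H^{s-2}_{\per}}$ and delivers the asserted bound; uniqueness follows immediately since $f \equiv 0$ forces each $\hat{u}_n = 0$. For non-integer $s \geq 2$ the result follows by complex interpolation, consistent with the definition of $H^s_{\per}$. I expect the main obstacle to be precisely this uniform-in-$n$ bookkeeping: the constants in the elliptic bootstrap must not swell with $\alpha_n$, and the zero mode has to be treated separately because its mass term vanishes and one must fall back on Poincar\'{e} on $\disc$.
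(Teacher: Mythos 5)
Your proof takes essentially the same route as the paper: Fourier decomposition in $a_3$, Lax--Milgram for each mode on $\disc$ (with Poincar\'{e} rescuing $n=0$), the key $L^2$ modal estimate $\alpha_n^2\|\hat u_n\|_{L^2(\disc)} \leq \|\hat f_n\|_{L^2(\disc)}$, elliptic regularity with an $n$-uniform constant, summation of the weighted modal norms, and interpolation for non-integer $s$. The only tactical difference is in the middle: you obtain $n$-uniformity by writing the elliptic estimate for the Dirichlet Laplacian with $\alpha_n^2 \hat u_n$ moved to the right-hand side and then iterating down in $k$, whereas the paper invokes the estimate for $L_n$ directly (inspecting Evans's proof to replace the coefficient's $L^\infty$ bound with your $L^2$ modal bound, so the constant is $n$-independent) and controls the intermediate-order terms by interpolating between the $m=0$ and $m=k$ cases together with Young's inequality. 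Both variants close; your iteration is perhaps more elementary, the paper's interpolation is more compact.
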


\begin{proof} Let $f = \sum_n \hat{f}_n e^{2 \pi i n a_3/ \length}$. We first consider the boundary-value problem, $L_n u = -\hat{f}_n$ on $\disc$ with $u=0$ on $\partial \disc$, where $L_n$ and its associated sesquilinear form ($B_n : H^1_0 (\disc) \times H^1_0 (\disc) \rightarrow \mathbb{C}$) are given by 
\begin{align}L_n u &= - \lap u + \left( \frac{2 \pi n}{\length} \right)^2 u\\
B_n[u,v] &= (\grad u, \grad v)_{\boldl^2(\disc)} + \left( \frac{2 \pi n}{\length} \right)^2 (u,v)_{L^2(\disc)}.
\end{align}
Clearly,  $B_n$ is continuous and coercive on $H^1_0 (\disc)$, thus we can apply Lax-Milgram to obtain a unique weak solution, $\hat{u}_n \in H^1_0 (\disc)$. The construction $u = \sum_n \hat{u}_n e^{2 \pi i n a_3/\length}$ is then our candidate for the solution of the boundary-value problem in $\Omega$. We now restrict our discussion to the case when $s=k \in \mathbb{Z}$. Given the regularity of $\partial \disc$ we can immediately conclude that each $\hat{u}_n \in H^k(\disc)$ is a strong solution. Our goal is to show that $u \in H^k_{\per}$. First we obtain some preliminary estimates for $\hat{u}_n$ where $n \neq 0$:
\begin{align}
B_n[\hat{u}_n,\hat{u}_n] &= (-\hat{f}_n,\hat{u}_n)_{L^2(\disc)}\\
\| \grad \hat{u}_n \|^2_{\boldl^2(\disc)} + \left( \frac{2 \pi n}{\length} \right)^2 \| \hat{u}_n \|^2_{L^2(\disc)} &\leq \| \hat{f}_n \|_{L^2(\disc)} \| \hat{u}_n \|_{L^2(\disc)}\\
\| \hat{u}_n \|_{L^2(\disc)} &\leq \left( \frac{\length}{2 \pi n} \right)^2 \| \hat{f}_n \|_{L^2(\disc)} 
\end{align}

Notice that from this estimate we can conclude
\begin{equation}\sum_n \left( \frac{2 \pi n}{\length} \right)^{2k} \| \hat{u}_n \|^2_{L^2(\disc)} \leq \sum_n \left( \frac{2 \pi n}{\length} \right)^{2(k-2)} \| \hat{f}_n \|^2_{L^2(\disc)} \leq \| f \|^2_{H^{k-2}_{\per}}< \infty.
\end{equation}
This gives us an estimate on the lowest order terms in the $H^k_{\per}$-norm. For the highest order terms, standard elliptic regularity theory (e.g., see \cite{Evans}, p. 323) provides an estimate of the form
\begin{equation}\| \hat{u}_n \|_{H^k(\disc)} \leq C_1 \| \hat{f}_n \|_{H^{k-2}(\disc)},
\end{equation} 
though the constant $C_1$ here generally depends on the coefficients (and hence $n$) of $L_n$. However, upon closer inspection of the proof of this result (e.g., in \cite{Evans}), we observe that our above estimates on $\| \hat{u}_n \|_{L^2(\disc)}$  can be used in place of the usual $L^{\infty}$-estimate on the coefficient $(2 \pi n/ \length)^2$ of $L_n$. In consequence this allows $C_1$ to be chosen independently of $n$. Therefore
\begin{equation}
	\sum_n \| \hat{u}_n \|^2_{H^k(\disc)} \leq C^2_1 \sum_n \| \hat{f}_n \|^2_{H^{k-2}(\disc)} \leq C^2_1 \| f \|^2_{H^{k-2}_{\per}(\Omega)}< \infty.
\end{equation}
Finally, we must show that the intermediate order terms in the $H^k_{\per}$-norm are also summable. Exploiting complex interpolation between $H^0(\disc)$ and $H^k(\disc)$ and Young's inequality, we obtain for each $0<m<k$
\begin{align}
	\sum_n \left( \frac{2 \pi n}{\length} \right)^{2m} \| \hat{u}_n \|^2_{H^{k-m}(\disc)} &\leq \sum_n \left( \frac{2 \pi n}{\length} \right)^{2m} \left( \| \hat{u}_n \|^{m/k}_{L^2(\disc)} \| \hat{u}_n \|^{1-m/k}_{H^k(\disc)} \right)^2\\
	&\leq C_2 \sum_n \left( \frac{2 \pi n}{\length} \right)^{2m(k-2)/k} \| \hat{f}_n \|^{2m/k}_{L^2(\disc)} \| \hat{f}_n \|^{2(k-m)/k}_{H^{k-2}(\disc)}
\end{align}
\begin{align}
	\makebox[3cm]{\ }&\leq C_2 \sum_n \left( \frac{m}{k} \left[ \left( \frac{2 \pi n}{\length} \right)^{2m(k-2)/k} \| \hat{f}_n \|^{2m/k}_{L^2(\disc)} \right]^{k/m} \right. \\
	&\hspace*{10em} \left. + \frac{k-m}{k} \left[ \| \hat{f}_n \|^{2(k-m)/k}_{H^{k-2}(\disc)} \right]^{k/(k-m)} \right) \\
	&= C_2 \sum_n \frac{m}{k} \left( \frac{2 \pi n}{\length} \right)^{2(k-2)} \| \hat{f}_n \|^2_{L^2(\disc)} + \frac{k-m}{k} \| \hat{f}_n \|^2_{H^{k-2}(\disc)}\\
	&\leq C_3 \| f \|^2_{H^{k-2}}.
\end{align} 
Thus $u \in {}^0 H^k_{\per}$ with $\| u \|^2_{H^k_{\per}} \leq C_4(k+1) \| f \|^2_{H^{k-2}_{\per}}$, which completes the proof for integer values of $s$. Interpolation then provides the remaining cases. 
\end{proof}

\section{A Spectral Result} \label{sec:spec}
\setcounter{equation}{0}

Our first goal is to use the modified Helmholtz projection $P$ to rewrite the problem (\ref{eq:lin_prob_main})--(\ref{eq:lin_prob_per}) in a variational form which has the velocity as its only unknown. First we notice that for any solution $(\vel, q)$ of the problem, \eqref{eq:lin_prob_div} implies $\vel(t) \in \boldp^0$ for each $t$. Since it is readily seen that $P$ commutes with $D_t$, applying $P$ to (\ref{eq:lin_prob_main}) yields
\begin{equation} 
	D_t \vel - \mu P \lap \vel + \grad q_1 = P \bodyf,
\end{equation}
where $\grad q_1 = P \grad q$ (with $\lap q_1 = 0$ on $\Omega$ and $q_1 = q$ on $S_F$) by Proposition \ref{prop:P}(3). This application of $P$ removes the indeterminacy of the pressure term in the sense that the value of $q_1$ is uniquely determined by $\vel$. 

\begin{lem} \label{lem:Q} Suppose $s \geq 2$ and $(\vel,q) \in \boldh^s_{\per} \times H^{s-1}_{\per}$ satisfies \eqref{eq:lin_prob_freesurf}. Then there exists a bounded linear operator $Q: \boldh^s_{\per} \rightarrow H^{s-1}_{\per}$ mapping $\vel \mapsto q_1$ where $q_1$ is the function provided by Proposition \ref{prop:P}(3) with $\grad q_1 = P \grad q$. 
\end{lem}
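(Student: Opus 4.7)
The plan is to define $Q(\vel) = q_1$ directly from $\vel$, bypassing $q$, by exploiting the fact that the free-surface condition \eqref{eq:lin_prob_freesurf} forces $q$ on $S_F$ to be a function of $\vel$ alone, and that $q_1$ is then uniquely determined as the harmonic function on $\Omega$ having those boundary values and obeying the $a_3$-periodicity. To isolate the boundary value of $q$, I would dot the vector identity $\mathbf{S}(\vel,q) = 0$ with the outward unit normal $\mathbf{n}$ on $S_F$ (noting $|\mathbf{n}|^2 = 1$), obtaining
\begin{equation}
q\big|_{S_F} = \mu \sum_{i,j=1}^3 (D_j v_i + D_i v_j)\, n_i n_j =: b(\vel).
\end{equation}
Since $s \geq 2$, each $D_j v_i$ lies in $H^{s-1}_{\per}$, so the trace theorem together with the smoothness of $\mathbf{n}$ on $S_F$ gives $b(\vel) \in H^{s-3/2}(S_F)$ with $\|b(\vel)\|_{H^{s-3/2}(S_F)} \leq C\|\vel\|_{\boldh^s_{\per}}$, and linearity of $\vel \mapsto b(\vel)$ is immediate.

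Next, I would define $Q(\vel) := q_1$, where $q_1$ is the unique solution of
\begin{equation}
\lap q_1 = 0 \textrm{ in } \Omega, \qquad q_1|_{S_F} = b(\vel), \qquad D^k_3 q_1|_{\Gamma_{\length}} = D^k_3 q_1|_{\Gamma_0} \textrm{ for } k \in \{0,1\}.
\end{equation}
Following the Fourier approach of Proposition \ref{prop:uniquesoln}, I would expand $b(\vel) = \sum_n \hat{b}_n(a_1,a_2)\, e^{2 \pi i n a_3/\length}$ and for each $n$ solve
\begin{equation}
-\lap \hat{q}_n + (2 \pi n/\length)^2 \hat{q}_n = 0 \textrm{ in } \disc, \qquad \hat{q}_n|_{\partial \disc} = \hat{b}_n,
\end{equation}
which is uniquely solvable by subtracting a lifting of $\hat{b}_n$ and applying Lax-Milgram on $H^1_0(\disc)$. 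Tracking the $n$-dependence of the resulting constants carefully, as in Proposition \ref{prop:uniquesoln}, gives $q_1 = \sum_n \hat{q}_n\, e^{2 \pi i n a_3/\length} \in H^{s-1}_{\per}$ with $\|q_1\|_{H^{s-1}_{\per}} \leq C\|\vel\|_{\boldh^s_{\per}}$, and non-integer $s$ is handled by complex interpolation.

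To conclude, I would identify this $q_1$ with the function supplied by Proposition \ref{prop:P}(3) applied to $q$: that proposition produces $\tilde q \in H^{s-1}_{\per}$ with $\lap \tilde q = 0$, $\tilde q|_{S_F} = q|_{S_F} = b(\vel)$, and $\grad \tilde q = P \grad q$, so $\tilde q$ satisfies the defining properties of $q_1$, and uniqueness of the periodic-Dirichlet harmonic problem forces $q_1 = \tilde q$. Linearity and boundedness of $Q$ then follow at once from the construction. I expect the main technical obstacle to be the $n$-uniform elliptic-regularity estimate for the 2D Dirichlet problem on $\disc$, since the zeroth-order coefficient $(2\pi n/\length)^2$ grows with $n$; this is analogous to the difficulty handled in Proposition \ref{prop:uniquesoln} but with the inhomogeneity sitting in the Dirichlet data rather than in the equation, so the Young-inequality/interpolation bookkeeping used there has to be adapted to the boundary-value setting.
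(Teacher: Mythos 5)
Your overall strategy is the same as the paper's: both read off $q_1\big|_{S_F}$ from the normal component of the free-surface condition (your $b(\vel)=\mu\sum_{i,j}(D_jv_i+D_iv_j)n_in_j$ agrees with the paper's $2\mu\kappa^{-2}\sum_{i,j=1}^2 a_ia_jD_jv_i$ once you use $n_3=0$ and $n_i=a_i/\kappa$ on $S_F$), and both then define $Q\vel$ as the solution of the periodic harmonic Dirichlet problem with that boundary datum and invoke Proposition~\ref{prop:P}(3) to identify it with the function $q_1$ satisfying $\grad q_1=P\grad q$.

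Where you diverge, and where there is a genuine gap, is in how the Dirichlet problem is solved. You propose a mode-by-mode analysis with boundary data $\hat b_n\in H^{s-3/2}(\partial\disc)$ and ``subtracting a lifting of $\hat b_n$,'' and you yourself flag the $n$-uniform elliptic estimate as ``the main technical obstacle'' --- but you do not carry it out, and this is not a routine adaptation of Proposition~\ref{prop:uniquesoln}. That proposition's key input is the decay $\|\hat u_n\|_{L^2(\disc)}\leq(\ell/2\pi n)^2\|\hat f_n\|_{L^2(\disc)}$ coming from testing $L_n\hat u_n=-\hat f_n$ against $\hat u_n$; for your Dirichlet problem $L_n\hat q_n=0$, $\hat q_n|_{\partial\disc}=\hat b_n$ there is no inhomogeneity in the equation to produce such decay, and after subtracting a generic lifting $E\hat b_n$ the right-hand side acquires a term $(2\pi n/\ell)^2 E\hat b_n$ whose $n$-growth exactly cancels the decay you would want, so the bookkeeping against $\|\hat b_n\|_{H^{s-3/2}}$ is not at all automatic. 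The paper sidesteps this entirely: it observes that the boundary datum is the trace of an \emph{explicit global} function $f=2\mu\kappa^{-2}\sum_{i,j=1}^2 a_ia_jD_jv_i$, which already lies in $H^{s-1}_{\per}(\Omega)$ with $\|f\|_{H^{s-1}_{\per}}\lesssim\|\vel\|_{\boldh^s_{\per}}$ because $a_ia_j$ is a smooth bounded multiplier. Writing $q_1=\phi+f$ then reduces matters to the \emph{homogeneous}-boundary problem $\lap\phi=-\lap f$, $\phi|_{S_F}=0$, which is literally Proposition~\ref{prop:uniquesoln} (applied with regularity index $s-1$, hence the split into the case $s=2$ handled directly by Lax--Milgram, and $s\geq 3$ handled by the proposition, with interpolation in between). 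If you want to complete your version, you should either prove the $n$-uniform trace-lifted Dirichlet estimate from scratch, or notice --- as the paper does --- that the free-surface datum comes with its own canonical extension into $\Omega$, which makes that estimate unnecessary.
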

\begin{proof} To see this, we use the fact that $q$ and $q_1$ agree on the free surface and observe that \eqref{eq:lin_prob_freesurf} implies that the normal component of $\mathbf{S}(\vel,q)$ must vanish on $S_F$. Hence $\mathbf{S}(\vel,q_1) \cdot \mathbf{n} = 0$ implies $q_1 = 2 \mu \rad^{-2} \sum_{i,j=1}^2 a_i a_j D_j v_i$ on $S_F$. Given $\vel \in \boldh^s_{\per}$, we note that $f = 2 \mu \rad^{-2} \sum_{i,j=1}^2 a_i a_j D_j v_i  \in H^{s-1}_{\per}$. For $s=2$, we can apply Lax-Milgram to obtain the existence of a unique weak solution $q_1 \in H^1_{\per}$ of the problem
		\begin{align}
			\lap q_1 &= 0 & & \mbox{on } \Omega\\
			q_1 &= f & & \mbox{on } S_F\\
			D^k_3 q_1 |_{\Gamma_{\length}} &= D^k_3 q_1 |_{\Gamma_0} & & \mbox{for } k \in \{ 0, 1 \}
		\end{align}
with $\| q_1 \|_{H^1_{\per}} \leq C_1 \| \vel \|_{\boldh^2_{\per}}$ where $C_1>0$ is independent of $\vel$. For $s \geq 3$, we consider the problem
		\begin{align}
			\lap \phi &= -\lap f & & \mbox{on } \Omega\\
			\phi &= 0 & & \mbox{on } S_F\\
			D^k_3 \phi |_{\Gamma_{\length}} &= D^k_3 \phi |_{\Gamma_0} & & \mbox{for } k \in \{0, 1 \}
		\end{align}
which has a unique solution $\phi \in {}^0H^{s-1}_{\per}$, by Proposition \ref{prop:uniquesoln}, satisfying $\| \phi \|_{H^{s-1}_{\per}} \leq C_2 \| \lap f \|_{H^{s-3}_{\per}}$ for some $C_2>0$ which is independent of $\vel$. Finally, we set $q_1 = \phi + f \in H^{s-1}_{\per}$ and observe that $\| q_1 \|_{H^{s-1}_{\per}} \leq C_2 \| \lap f \|_{H^{s-3}_{\per}} + \| f \|_{H^{s-1}_{\per}} \leq C_3 \| \vel \|_{\boldh^s_{\per}}$. Interpolation now yields the claim for the remaining values of $s$. It readily follows that the constructed operator is linear in $\vel$.
\end{proof}

We now take the general approach used in semigroup theory by treating \eqref{eq:lin_prob_main} as an abstract ordinary differential equation with respect to time whose solution is, for each value of $t$, an element of the appropriate function space ($\boldv^s$) on $\Omega$. If we define an operator $A: \boldv^s \rightarrow \boldp^{s-2}$ by
\begin{equation}
A \vel = - \mu P \lap \vel + \grad Q \vel,
\end{equation}
the problem \eqref{eq:lin_prob_main}--\eqref{eq:lin_prob_per} takes on the form
\begin{align}\dot{\vel} + A \vel &= P \bodyf  & & \mbox{on } (0,T) \times \Omega  \label{eq:var_lin_prob1}\\
\vel(0,\cdot) &= 0 & & \mbox{on }\Omega \label{eq:var_lin_prob2}.
\end{align}
Notice that \eqref{eq:lin_prob_freesurf} is satisfied since our construction of $Q$ ensures that the normal component of $\mathbf{S}(\vel,q)$ will vanish on $S_F$. The operator $A$ is a modification of the standard Stokes operator.

We tackle the matter of determining the spectrum of $-A$ first. Unfortunately, in contrast to the problems treated in \cite{allain1, allain2, beale1, beale2,Teramoto}, $A$ is not injective with our boundary conditions (implying that $0$ lies in the spectrum of $A$) since $A(\vel+\mathbf{c})=A\vel$ for any constant vector $\mathbf{c}$. This, combined with the inability to apply the Poincar\'{e} inequality in general, makes the problem of determining the spectrum more challenging here than in the aforementioned cases. Restricting the spectrum of $A$ to a sector in the right half of the plane and providing estimates on the resolvent operator, the following theorem is a key result of this work.

We will show that $\rho(A)$ contains all $\lambda\in \comp $ such that $| \textrm{Im}(\lambda) | > \textrm{Re}(\lambda)$. Given $\bodyg \in \boldp^{s-2}$ and $\lambda$ with $| \textrm{Im}(\lambda) | > \textrm{Re}(\lambda)$, we find a unique solution $(\vel , q) \in \boldv^s \times H^{s-1}_{\per}$ of the problem 
\begin{equation} \label{eq:stat_lin}
-\mu \lap \vel - \lambda \vel + \grad q = \bodyg
\end{equation}
along with \eqref{eq:lin_prob_div}, \eqref{eq:lin_prob_freesurf}, and \eqref{eq:lin_prob_per}. To see that this is equivalent to the statement about $\rho(A)$, suppose that there exists $\vel \in \boldv^s$ such that $(A- \lambda I) \vel =  \bodyg$. Using our decomposition of $\boldl^2$, we find $q_0 \in {}^0H^1_{\per}$ such that $\grad q_0 = \mu (I-P) \lap \vel$. Setting $q = Q \vel + q_0$, we obtain \eqref{eq:stat_lin}. It is now straightforward to verify that $(\vel, q)$ also satisfies \eqref{eq:lin_prob_div}, \eqref{eq:lin_prob_freesurf}, and \eqref{eq:lin_prob_per}. Conversely, given a solution $(\vel,q)$ of the stationary problem we can apply $P$ to \eqref{eq:stat_lin} to obtain $(A - \lambda I) \vel = \bodyg$. Hence $(A - \lambda I) \vel = \bodyg$ has a unique solution $\vel$ if and only if the problem \eqref{eq:lin_prob_div}, \eqref{eq:lin_prob_freesurf}, \eqref{eq:lin_prob_per}, \eqref{eq:stat_lin} has a unique solution $(\vel,q)$.

\begin{thm} \label{thm:spectrum_weak} For $\bodyg \in \boldp^{s-2}$, the problem \eqref{eq:lin_prob_div}, \eqref{eq:lin_prob_freesurf}, \eqref{eq:lin_prob_per}, \eqref{eq:stat_lin} has a unique weak solution $(\vel,q) \in \boldp^1 \times L^2$
for each $\lambda\in \comp $ such that $| \textrm{Im}(\lambda) | > \textrm{Re}(\lambda)$.
\end{thm}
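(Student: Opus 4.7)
My plan is to recast the stationary problem as a variational equation on $\boldp^1$, obtained by taking the $\boldl^2$-inner product of \eqref{eq:stat_lin} with divergence-free test fields, to solve it by the complex Lax-Milgram theorem after a rotation calibrated to the sector $|\operatorname{Im}\lambda| > \operatorname{Re}\lambda$, and finally to recover $q\in L^2$ by de Rham's theorem (in the spirit of Proposition~\ref{prop:orthog}).

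Taking the $\boldl^2$-pairing of \eqref{eq:stat_lin} with $\wvel\in\boldp^1$ and using the identity $\lap v_i=\sum_j D_j(D_j v_i+D_i v_j)$, valid because $\grad\cdot\vel=0$, together with $\grad\cdot\wvel=0$, the integrations by parts of $-\mu\lap\vel$ and $\grad q$ against $\wvel$ combine to give the interior form $2\mu(e(\vel),e(\wvel))_{\boldl^2}$ and the single boundary integral $\int_{\partial\Omega}\mathbf{S}(\vel,q)\cdot\overline{\wvel}$, where $e_{ij}(\vel)=\tfrac{1}{2}(D_j v_i+D_i v_j)$. This boundary integral vanishes on $S_F$ by \eqref{eq:lin_prob_freesurf}, and on $\Gamma_0\cup\Gamma_\length$ because $\vel$, $q$, and $\wvel$ are periodic while the outward normals are opposite. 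Thus the problem reduces to finding $\vel\in\boldp^1$ satisfying
\[
B_\lambda[\vel,\wvel]:=2\mu(e(\vel),e(\wvel))_{\boldl^2}-\lambda(\vel,\wvel)_{\boldl^2}=(\bodyg,\wvel)_{\boldl^2}\qquad\text{for all }\wvel\in\boldp^1.
\]

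Continuity of $B_\lambda$ is immediate. For coercivity, write $\lambda=a+ib$ with $|b|>a$ and choose $\theta=\pi/4$ if $b>0$ or $\theta=-\pi/4$ if $b<0$; a direct computation yields
\[
\operatorname{Re}\bigl(e^{i\theta}B_\lambda[\vel,\vel]\bigr)=\sqrt{2}\,\mu\,\|e(\vel)\|_{\boldl^2}^{2}+\tfrac{1}{\sqrt{2}}(|b|-a)\,\|\vel\|_{\boldl^2}^{2},
\]
which is strictly positive precisely on the stated sector. Korn's second inequality, $\|\vel\|_{\boldh^1}^2\leq C_K(\|e(\vel)\|_{\boldl^2}^2+\|\vel\|_{\boldl^2}^2)$, valid on the bounded Lipschitz domain $\Omega$ with no boundary constraint, upgrades this to full coercivity of $B_\lambda$ on $\boldp^1$ with a $\lambda$-dependent constant, and the complex Lax-Milgram theorem then furnishes a unique $\vel\in\boldp^1$ satisfying the weak identity. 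Restricting the test class to $\boldc^\infty_{\textrm{c}\sigma}$ shows the distribution $-\mu\lap\vel-\lambda\vel-\bodyg$ annihilates compactly supported divergence-free fields, so de Rham's theorem (cf.\ Temam) delivers $q\in L^2$, unique up to constants, with $-\mu\lap\vel-\lambda\vel+\grad q=\bodyg$ distributionally; reversing the earlier integration by parts against general $\wvel\in\boldp^1$ recovers \eqref{eq:lin_prob_freesurf} and the periodicity of $q$ in the weak sense, while uniqueness of $(\vel,q)$ follows from the coercivity estimate applied to the difference of two solutions. The main obstacle I foresee is the coercivity step itself: since $\Omega$ has no fixed bottom, the Poincar\'e inequality is unavailable, so $\|e(\vel)\|_{\boldl^2}^2$ alone cannot control $\|\vel\|_{\boldh^1}^2$; the hypothesis $|\operatorname{Im}\lambda|>\operatorname{Re}\lambda$ is precisely what permits, after rotation, the $\lambda$-term to supply the missing $L^2$ control so that Korn's inequality can close the estimate.
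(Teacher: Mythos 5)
Your proposal is correct and follows essentially the same route as the paper: recasting \eqref{eq:stat_lin} as a variational problem in $\boldp^1$ for the symmetric-gradient sesquilinear form with the free-surface condition encoded weakly, proving continuity and coercivity, applying complex Lax--Milgram, and recovering $q\in L^2$ (unique up to a constant) from de~Rham/Temam-type arguments. The only cosmetic difference is in the coercivity step, where you rotate by $e^{\pm i\pi/4}$ to isolate a positive real part before invoking the second Korn inequality, whereas the paper applies the elementary bound $a^2+b^2\geq\tfrac12(a+b)^2$ to $|\langle\vel,\vel\rangle|^2$; the two manipulations yield the same estimate.
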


\begin{proof} Notice that for any $\vel \in \boldp^1$, $\vel$ satisfies \eqref{eq:lin_prob_div} and the portion of \eqref{eq:lin_prob_per} referring to the velocity. The free surface condition \eqref{eq:lin_prob_freesurf} is not necessarily satisfied though and will need to be incorporated into a variational formulation directly. To that end, let us consider the sesquilinear form $\langle \cdot, \cdot \rangle : \boldp^1 \times \boldp^1 \rightarrow \mathbb{C}$ defined by 
\begin{equation} \label{eq:sesqui} \langle \vel , \uvel \rangle = - \lambda ( \vel, \uvel)_{\boldl^2} + \frac{\mu}{2} \sum_{i,j=1}^3 \int_{\Omega}  (D_j v_i + D_i v_j)(D_j \bar{u}_i + D_i \bar{u}_j) . \end{equation}
Now suppose $\uvel \in \boldh^1_{\per} , \vel \in \boldp^2$, $q \in H^1_{\per}$ and observe that 
\begin{align}
\int_{\Omega} (-\mu \lap \vel - \lambda \vel + \grad q) \cdot \overline{\uvel} &= - \mu \left( \int_{\Omega} \sum_i \lap v_i \overline{u}_i \right) - \lambda ( \vel, \uvel)_{\boldl^2} + \int_{\partial \Omega} q (\overline{\uvel} \cdot \mathbf{n}) - \int_{\Omega} q \grad \cdot \overline{\uvel} \\
&= \mu \sum_{i,j} \int_{\Omega} D_j \overline{u}_i D_j v_i  + \sum_i \int_{\partial \Omega} \overline{u}_i ( q n_i - \mu \sum_j D_j v_i n_j )  \nonumber \\
& \qquad \qquad - \lambda ( \vel, \uvel)_{\boldl^2} - \int_{\Omega} q \grad \cdot \overline{\uvel}
\end{align}
\begin{align}
&= \langle \vel , \uvel \rangle + \int_{\partial \Omega} \mathbf{S} (\vel , q) \cdot \overline{\uvel}  + \mu \sum_{i,j} \int_{\partial \Omega} \overline{u}_i D_i v_j n_j  \nonumber \\
& \qquad \qquad - \int_{\Omega} D_j \overline{u}_i D_i v_j - \int_{\Omega} q \grad \cdot \overline{\uvel} \\
&= \langle \vel , \uvel \rangle + \int_{\partial \Omega} \mathbf{S} (\vel , q) \cdot \overline{\uvel} - \int_{\Omega} q \grad \cdot \overline{\uvel} + \mu \sum_i \int_{\Omega} \overline{u}_i D_i (\grad \cdot \vel)  \\
&= \langle \vel , \uvel \rangle + \int_{\partial \Omega} \mathbf{S} (\vel , q) \cdot \overline{\uvel} - \int_{\Omega} q \grad \cdot \overline{\uvel}. \label{eq:weak_form}
\end{align}
Notice that the pair $(\vel, q)$ currently satisfies \eqref{eq:lin_prob_div} and \eqref{eq:lin_prob_per}. If we suppose that $(\vel,q)$ additionally satisfies \eqref{eq:stat_lin} and \eqref{eq:lin_prob_freesurf}, then we obtain
\begin{equation}
(\bodyg, \uvel)_{\boldl^2} = \langle \vel , \uvel \rangle + \int_{\Gamma_{\length}} \mathbf{S} (\vel , q) \cdot \overline{\uvel} + \int_{\Gamma_0} \mathbf{S} (\vel , q) \cdot \overline{\uvel} = \langle \vel , \uvel \rangle
\end{equation}
for all $\uvel \in \boldp^1$. Thus $ \langle \vel , \uvel \rangle = (\bodyg, \uvel)_{\boldl^2} $ can be seen as a weak formulation of the full problem which does not involve $q$. In an effort to apply Lax-Milgram, we verify that the sesquilinear form is both continuous and coercive. Applying H\"{o}lder, $| \langle \vel, \uvel \rangle | \leq  C \| \vel \|_{\boldh^1_{\per}} \| \uvel \|_{\boldh^1_{\per}}$, where $C>0$ depends on $\mu$ and $\lambda$. Hence the sesquilinear form is continuous. That it is also coercive follows from Korn's inequality:
\begin{align}
| \langle \vel, \vel \rangle |^2 &= \left( \frac{\mu}{2} \sum_{i,j=1}^3 \int_{\Omega} | D_j v_i + D_i v_j |^2 - \mathrm{Re}(\lambda)\| \vel \|^2_{\boldl^2} \right)^2 + \left( \mathrm{Im}(\lambda) \| \vel \|^2_{\boldl^2} \right)^2 \label{eq:coercive}  \\
&\geq \frac{1}{2} \left( \frac{\mu}{2} \sum_{i,j=1}^3 \int_{\Omega} | D_j v_i + D_i v_j |^2  - \mathrm{Re}(\lambda)\| \vel \|^2_{\boldl^2}  + | \mathrm{Im}(\lambda)| \cdot \| \vel \|^2_{\boldl^2} \right)^2 \\
& \geq C \| \vel \|^4_{\boldh^1_{\per}}. 
\end{align}
For $\mathrm{Re}(\lambda) \geq 0$, line \eqref{eq:coercive} implies
\begin{equation}
| \langle \vel, \vel \rangle |^2 \geq \mathrm{Im}(\lambda)^2 \| \vel \|^4_{\boldl^2} \geq  \frac{1}{2} |\lambda|^2 \| \vel \|^4_{\boldl^2}. \label{eq:sequi_est}
\end{equation}
Moreover, the same estimate can be obtained for $\mathrm{Re}(\lambda) < 0$ since the right-hand side of line \eqref{eq:coercive} then expands to something of the form $\phi + | \lambda |^2 \| \vel \|^4_{\boldl^2}$ where $\phi \geq 0$. Since the sesquilinear form satisfies the conditions of Lax-Milgram, we obtain a unique weak solution $\vel \in \boldp^1$ of \eqref{eq:lin_prob_div}, \eqref{eq:lin_prob_freesurf}, \eqref{eq:lin_prob_per}, \eqref{eq:stat_lin} along with the estimate
\begin{equation}
\| \vel \|_{\boldh^1_{\per}} \leq C \| \bodyg \|_{\boldl^2}.
\end{equation}
We now seek an associated pressure, $q$,  of $\vel$. Recall that an associated pressure need only satisfy \eqref{eq:stat_lin} in the sense of distributions (i.e., when tested against arbitrary $\uvel \in \boldc^{\infty}_c$). As with the velocity, we begin by finding a weak formulation for the pressure. Notice that, for $q \in H^1_{\per}$ with $(\vel,q)$ satisfying \eqref{eq:lin_prob_freesurf}, we obtain from \eqref{eq:weak_form} that
\begin{align}
\int_{\Omega} q \grad \cdot \overline{\uvel} = \langle \vel, \uvel \rangle - (\bodyg, \uvel)_{\boldl^2} \label{eq:weak_form1}
\end{align}
for all $\uvel \in \boldh^1_{\per}$. Using continuity of the sesquilinear form we obtain immediately that the right-hand side is a bounded linear functional in $\overline{\uvel}$, $\mathbf{F}: \boldc^{\infty}_{\textrm{c}} \rightarrow \mathbb{C}$, which vanishes when $\grad \cdot \overline{\uvel} = 0 $. From \cite{Sohr}, we know there is a unique $\tilde{q} \in L^2$ such that 
\begin{equation}
\label{eq:q_weak_form} \mathbf{F} = \grad \tilde{q} \qquad \mbox{and} \qquad \int_{\Omega} \tilde{q} = 0.
\end{equation}
It is now straightforward to verify that $q = - \tilde{q} $ satisfies \eqref{eq:stat_lin} in the distributional sense and hence is an associated pressure of $\vel$. It is uniquely determined under the additional condition $\int_{\Omega} q = 0$, but otherwise is unique only up to a constant.
\end{proof}

The proof of the preceding theorem allows us to draw the following conclusion about the case $\lambda=0$.

\begin{cor}\label{0sol}
For $\bodyg =0$, $(\vel,q) \in \boldp^1 \times L^2$ is a weak solution of the problem \eqref{eq:lin_prob_div}, \eqref{eq:lin_prob_freesurf}, \eqref{eq:lin_prob_per}, \eqref{eq:stat_lin} with $\lambda=0$ if and only if $\vel$ and $q$ are constant.
\end{cor}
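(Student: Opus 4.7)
For the ``if'' direction, if $\vel$ and $q$ are constants, then $E(\vel)=0$ and $\grad q=0$. The weak velocity formulation $\langle\vel,\uvel\rangle=0$ from \eqref{eq:sesqui} (with $\lambda=0$, $\bodyg=\mathbf{0}$) then holds trivially for every $\uvel\in\boldp^1$, and the pressure-side identity derived in the proof of Theorem~\ref{thm:spectrum_weak} reduces to $0=0$ against compactly supported test fields.

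For the converse, my strategy is to extract a Killing-field structure from the velocity equation and then pin down the pressure. Taking $\uvel=\vel\in\boldp^1$ in $\langle\vel,\uvel\rangle=(\bodyg,\uvel)_{\boldl^2}=0$ with $\lambda=0$ yields
\begin{equation}
\frac{\mu}{2}\sum_{i,j=1}^3\int_\Omega\bigl|D_j v_i+D_i v_j\bigr|^2=0,
\end{equation}
so the symmetric gradient $E(\vel)$ vanishes a.e.\ on $\Omega$. The classical rigidity fact (iterate $D_k(D_j v_i+D_i v_j)=0$ and use equality of mixed partials) then forces $\vel$ to be an infinitesimal rigid motion, $\vel(\mathbf{a})=\mathbf{c}+W\mathbf{a}$ with $W$ skew-symmetric. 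Comparing $\vel|_{\Gamma_0}$ and $\vel|_{\Gamma_\length}$ via \eqref{eq:lin_prob_per} kills the third column of $W$, and skew-symmetry then kills the third row, so $\vel$ reduces to $\mathbf{c}+w(a_2,-a_1,0)^T$, a translation plus a rotation about the cylinder axis.

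For the pressure, I would revisit the identity
\begin{equation}
\int_\Omega q\,\grad\cdot\overline{\uvel}=\langle\vel,\uvel\rangle-(\bodyg,\uvel)_{\boldl^2}
\end{equation}
established in the course of proving Theorem~\ref{thm:spectrum_weak}. Both terms on the right already vanish ($\langle\vel,\uvel\rangle=0$ since $E(\vel)=0$ and $\lambda=0$, $(\bodyg,\uvel)_{\boldl^2}=0$ since $\bodyg=\mathbf{0}$), so $\int_\Omega q\,\grad\cdot\overline{\uvel}=0$ on $\boldc^\infty_{\textrm{c}}(\Omega)$, giving $\grad q=0$ in $\mathcal{D}'(\Omega)$ and hence $q$ constant on the connected set $\Omega$. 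The step I expect to be the main obstacle is eliminating the residual rotational piece $w(a_2,-a_1,0)^T$: the bilinear form $\langle\cdot,\cdot\rangle$ does not detect it, so closing the argument will need either the trace form of the free-surface condition \eqref{eq:lin_prob_freesurf} (which, with $E(\vel)=0$ already in hand, reads $q\mathbf{n}=\mathbf{0}$ on $S_F$) coupled with the axial symmetry of the cylinder, or an additional normalization built into the paper's weak-solution convention.
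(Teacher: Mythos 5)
Your caution about the axial rotation is exactly right, and the obstacle you flagged is not a gap in your reasoning but a flaw in the corollary itself. The paper's own proof disposes of the converse in a single line by asserting a homogeneous Korn-type inequality, $\frac{\mu}{2}\sum_{i,j=1}^3\int_\Omega|D_j v_i + D_i v_j|^2 \geq 2\mu\sum_{i,j=1}^3\int_\Omega|D_j v_i|^2$, which would give $\grad\vel = 0$ directly. But that form of Korn's inequality requires a Dirichlet-type condition on some portion of $\partial\Omega$ to kill the rigid-body modes, and this domain has none: $S_F$ carries only a stress condition and $\Gamma_0$, $\Gamma_\length$ carry periodicity. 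The field $\vel=(a_2,-a_1,0)^T$ has vanishing symmetrized gradient yet $D_1 v_2 = -1$, so the asserted inequality is false.

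Your Killing-field reduction --- $\vel = \mathbf{c} + W\mathbf{a}$ with $W$ skew, then using \eqref{eq:lin_prob_per} to annihilate the third column and hence, by skew-symmetry, the third row of $W$ --- is the correct way to exploit the boundary conditions, and it correctly isolates the residual mode $w(a_2,-a_1,0)^T$. That mode genuinely survives: it is smooth, divergence-free and trivially $a_3$-periodic, so it lies in $\boldp^1$; it satisfies $\lap\vel=0$; it makes the form \eqref{eq:sesqui} vanish against every test field when $\lambda=0$; and with $q=0$ it satisfies $\mathbf{S}(\vel,0)=0$ on $S_F$ because the symmetrized gradient vanishes. Nothing in the weak formulation of Theorem~\ref{thm:spectrum_weak} --- neither the sesquilinear form, nor the distributional pressure identity, nor the tangential or normal trace of the free-surface condition --- constrains $w$. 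So the ``only if'' direction of the corollary fails as stated, and the kernel of $A$ is four-dimensional over $\comp$ (three constant translations plus the axial rotation), not one-dimensional as the remark following Theorem~\ref{thm:resolve_inv} asserts. You should not expect to close the gap you flagged; what would be needed is either a normalization in the solution space that quotients out the axial rotation, or a corrected statement of the corollary and of the multiplicity of the zero eigenvalue.
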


\begin{proof}
	Reasoning as in \eqref{eq:coercive}, we have 
\begin{equation}
	0 =  | \langle \vel, \vel \rangle | =  \frac{\mu}{2} \sum_{i,j=1}^3 \int_{\Omega} | D_j v_i + D_i v_j |^2 \geq 2\mu \sum_{i=1}^3 \int_{\Omega} | D_j v_i |^2 .
\end{equation}
Hence $\vel$ and {\em a fortiori} $q$ are constant.
\end{proof}

\begin{thm} \label{thm:resolve_inv} Let $s \geq 2$. Then $\sigma(A) \subset \{ \lambda \in \mathbb{C} : | \textrm{Im}(\lambda) | \leq \textrm{Re}(\lambda) \} $. Moreover, for  $\lambda$ with $| \textrm{Im}(\lambda) | > \textrm{Re}(\lambda)$ and $| \lambda | \geq \eps > 0$ the resolvent operator $R(\lambda; A) = (A-\lambda I)^{-1}$ satisfies
\begin{equation} \label{eq:resolve_ineq} 
\| R(\lambda; A) \bodyg \|_{\boldh_p^s} \leq C (\| \bodyg \|_{\boldh_p^{s-2}} + (1 + \eps^{-1})  ( | \lambda|+1 )^{(s-2)/2} \| \bodyg \|_{\boldl^2}) 
\end{equation}
for all $\bodyg \in \boldp^{s-2}$. Here $C>0$ is a constant which is independent of $\lambda$, $\eps$, and $\bodyg$.
\end{thm}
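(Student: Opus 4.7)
The plan is to upgrade the weak solution $(\vel,q) \in \boldp^1 \times L^2$ supplied by Theorem \ref{thm:spectrum_weak} to $(\vel,q) \in \boldv^s \times H^{s-1}_{\per}$; by the equivalence established just before Theorem \ref{thm:spectrum_weak}, this simultaneously identifies $\vel$ with $R(\lambda;A)\bodyg$ and places $\sigma(A)$ inside $\{|\textrm{Im}(\lambda)| \leq \textrm{Re}(\lambda)\}$. Everything beyond this bootstrap is a matter of tracking the $\lambda$- and $\eps$-dependence of the resulting norm estimates carefully enough to match \eqref{eq:resolve_ineq}.

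For the regularity upgrade I would Fourier-decompose $\vel = \sum_n \hat{\vel}_n(a_1,a_2) e^{2\pi i n a_3/\length}$ (and similarly $\bodyg$ and $q$), reducing the 3D resolvent problem to a family of 2D Stokes-type systems on the disc $\disc$ with the traction boundary condition on $\partial \disc$ and a zero-order coefficient $\mu(2\pi n/\length)^2 - \lambda$. Standard 2D elliptic/Stokes regularity on $\disc$ yields $\|\hat{\vel}_n\|_{H^s(\disc)} + \|\hat{q}_n\|_{H^{s-1}(\disc)}$ controlled by $\|\hat{\bodyg}_n\|_{H^{s-2}(\disc)}$ plus lower-order terms; exactly as in the proof of Proposition \ref{prop:uniquesoln}, the constant can be chosen independently of $n$ by substituting the available $L^2$-bound on $\hat{\vel}_n$ for the naive $L^\infty$-bound on the $n$-dependent coefficient. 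Summing these mode estimates with the weights built into $\|\cdot\|_{H^s_{\per}}$ recovers $\vel \in \boldp^s$ and $q \in H^{s-1}_{\per}$, and the strong version of the tangential condition $\mathbf{S}_{\textrm{tan}}(\vel)=0$ on $S_F$ (i.e.\ membership in $\boldv^s$) follows from a trace argument once enough regularity is in hand.

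To pin down \eqref{eq:resolve_ineq} I would rewrite \eqref{eq:stat_lin} as $-\mu\lap \vel + \grad q = \bodyg + \lambda\vel$, feed this into the stationary Stokes estimate just established, and control the forcing as $\|\bodyg\|_{\boldh^{s-2}_{\per}} + |\lambda|\|\vel\|_{\boldh^{s-2}_{\per}}$. Complex interpolation $\|\vel\|_{\boldh^{s-2}_{\per}} \leq \|\vel\|_{\boldl^2}^{2/s}\|\vel\|_{\boldh^s_{\per}}^{(s-2)/s}$ combined with Young's inequality absorbs the $\boldh^s_{\per}$-factor on the left and leaves a residual of order $|\lambda|^{s/2}\|\vel\|_{\boldl^2}$; the weak estimate $\|\vel\|_{\boldl^2} \leq \sqrt{2}\,|\lambda|^{-1}\|\bodyg\|_{\boldl^2}$ extracted from \eqref{eq:sequi_est} converts this into $(|\lambda|+1)^{(s-2)/2}\|\bodyg\|_{\boldl^2}$, and the restriction $|\lambda|\geq\eps$ is precisely what produces the $(1+\eps^{-1})$ prefactor as the constant in \eqref{eq:sequi_est} is uniform away from zero. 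The principal obstacle is the 2D Stokes regularity step on $\disc$ with the traction boundary condition held uniformly in $n$ and $\lambda$: one must verify the complementing (Lopatinskii) condition for the free-surface problem and then shepherd the coefficient $\mu(2\pi n/\length)^2 - \lambda$ through the ADN/Solonnikov estimates, in the same spirit as the Laplace argument in Proposition \ref{prop:uniquesoln}.
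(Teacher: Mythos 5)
Your high-level plan (bootstrap the weak solution to $\boldv^s \times H^{s-1}_{\per}$, then track $\lambda$- and $\eps$-dependence via interpolation and Young's inequality) is the same as the paper's, and your final step agrees with the paper almost line by line: both use $\|\vel\|_{\boldh^{s-2}_{\per}} \leq \|\vel\|_{\boldl^2}^{2/s}\|\vel\|_{\boldh^s_{\per}}^{(s-2)/s}$, Young to absorb the top-order term, and \eqref{eq:sequi_est} plus $|\lambda|\geq\eps$ to produce the $(1+\eps^{-1})(|\lambda|+1)^{(s-2)/2}$ factor. The regularity bootstrap, however, is where you diverge and where the gap lies. The paper does \emph{not} Fourier-decompose: it first reflects the weak solution to $\Omega_1$, invokes interior Stokes regularity from \cite{lady1} and boundary regularity from \cite{solo2} to reach $\boldp^2 \times H^1_{\per}$, then shows the free-surface condition actually holds strongly by a Solonnikov-style localization on $S_F$, and only then applies ADN to the 3D problem posed on the solid torus $\tor$, where the periodicity is built in geometrically. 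The dependence of the ADN constant on $\lambda$ is handled by applying the estimate once at the fixed point $\lambda=-1$ (yielding $\|\vel\|_{\boldh^s_{\per}} \leq c_1\|(A+I)\vel\|_{\boldh^{s-2}_{\per}}$ with $c_1$ independent of $\lambda$) and then comparing $(A+I)\vel$ with $(A-\lambda I)\vel = \bodyg$.

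The concrete gap in your proposal is that, after Fourier decomposition in $a_3$, you do \emph{not} get a 2D Stokes system with an $n$-dependent zero-order coefficient in the sense of Proposition \ref{prop:uniquesoln}. The mode index $n$ also enters the divergence constraint (as $D_1\hat{v}_{1,n} + D_2\hat{v}_{2,n} + i(2\pi n/\length)\hat{v}_{3,n} = 0$), the pressure gradient in the third momentum equation (as $i(2\pi n/\length)\hat{q}_n$), and the traction boundary condition on $\partial\disc$ through the cross terms $D_3 v_j + D_j v_3$. These are first-order and boundary contributions, not zero-order ones. The trick in Proposition \ref{prop:uniquesoln} --- substituting the $L^2$-bound on $\hat{u}_n$ for the $L^\infty$-bound on the coefficient $(2\pi n/\length)^2$ --- exploits precisely the fact that $n$ appears only in a multiplicative zero-order term, and it does not propagate to a situation where $n$ multiplies derivatives of the unknown inside the constraint and on the boundary. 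Making the 2D mode-wise constants genuinely uniform in $n$ would amount to reproving parameter-elliptic ADN/Solonnikov estimates with $n$ tracked through the Lopatinskii symbol, which is a substantially heavier task than you acknowledge. The paper sidesteps this entirely by running ADN in 3D on $\tor$, where $D_3$ is an honest derivative and the $n$-dependence never becomes a free parameter in a coefficient.

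A secondary gap: recovering the strong free-surface condition (membership in $\boldv^s$) is not merely ``a trace argument once enough regularity is in hand.'' The weak formulation \eqref{eq:sesqui} does not encode the boundary condition by restriction; one has to test against carefully constructed localized fields $\uvel$ with prescribed trace on $S_F$ (as the paper does, following \cite{solo1}) to deduce $\mathbf{S}_{\textrm{tan}}(\vel)=0$ pointwise, and then a further argument (using the characterization of $(\boldp^0)^{\perp}$ from Proposition \ref{prop:orthog}) to normalize $q$ so that the normal component also vanishes. Lastly, note that moving $\lambda\vel$ to the right-hand side and invoking a ``stationary Stokes estimate'' at $\lambda=0$ runs into the fact that $0\in\sigma(A)$, so that estimate can only hold with a lower-order compensating term; you seem aware of this, but the paper's choice of $\lambda=-1$ (in the resolvent set) as the anchor point avoids the issue cleanly and is worth adopting.
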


\begin{proof} 
We will now demonstrate that the weak solution provided by Theorem \ref{thm:spectrum_weak} can, in fact, be made into a strong solution of the problem \eqref{eq:lin_prob_div}, \eqref{eq:lin_prob_freesurf}, \eqref{eq:lin_prob_per}, \eqref{eq:stat_lin}. In order to avoid the lack of regularity due to the ``artificial'' corners in our domain, we turn to the equivalent problem of 
finding a weak solution $(\vel_1,q_1)$ of the problem \eqref{eq:lin_prob_div}, \eqref{eq:lin_prob_freesurf}, \eqref{eq:lin_prob_per}, \eqref{eq:stat_lin} on the larger domain $\Omega_1$. By choosing $q_1$ such that $\int_{\Omega_1} q_1 = 0$ we can ensure that $(\vel_1,q_1)$ is simply the periodic extension of $(\vel,q)$ to $\Omega_1$. Then it follows from standard results \cite{lady1} that $(\vel_1,q_1)$ has the additional regularity we seek on compactly contained subsets of $\Omega_1$. To obtain regularity all the way up to the boundary, we follow the approach in \cite{solo2} which is applicable to the boundary provided that it is smooth locally. Thus the pair $(\vel_1,q_1)$ has the desired regularity near $S_F$ up to and including the intersections with $\Gamma_0$, $ \Gamma_{\length}$ since these regions occur on a smooth portion of the free surface on $\Omega_1$. It follows that $\vel_{\per} \in \boldh^2_{\textrm{loc}}(\Omega_{\infty}) $ and $q_{\per} \in H^1_{\textrm{loc}}(\Omega_{\infty})$, hence  $\vel \in \boldp^2$ and $q \in H^1_{\per}$.

To see that $(\vel,q)$ provides us with a strong solution of our problem, we only need to verify that \eqref{eq:lin_prob_freesurf} and \eqref{eq:stat_lin} are satisfied. Using \eqref{eq:weak_form}, for all $\uvel \in \boldp^1$ we have
\begin{equation}
\label{eq:strong_check} (-\mu \lap \vel - \lambda \vel + \grad q - \bodyg, \uvel)_{\boldl^2} = \int_{\partial \Omega} \mathbf{S} (\vel , q) \cdot \overline{\uvel} = \int_{S_F} \mathbf{S} (\vel , q) \cdot \overline{\uvel}. 
\end{equation}
Taking $\uvel \in {}^0 \boldc^{\infty}_{\per \sigma}$ implies that $-\mu \lap \vel - \lambda \vel + \grad q - \bodyg$ lies in the orthogonal complement of $ \overline{ {}^0 \boldc^{\infty}_{{\per} \sigma}}^{\| \cdot \|_{\boldl^2}}$. It was demonstrated in the proof of Proposition \ref{prop:orthog} that this orthogonal complement consists of the gradients of functions in $ H^1_{\per}$, so that $-\mu \lap \vel - \lambda \vel + \grad q - \bodyg = \grad p$ for some $p \in H^1_{\per}$. However, \eqref{eq:weak_form} now yields
\begin{equation}
(\bodyg, \uvel)_{\boldl^2} = (-\mu \lap \vel - \lambda \vel + \grad (q-p) , \uvel)_{\boldl^2} = \langle \vel , \uvel \rangle + \int_{\partial \Omega} \mathbf{S} (\vel , q-p) \cdot \overline{\uvel} - \int_{\Omega} (q-p) \grad \cdot \overline{\uvel}
\end{equation}
for all $\uvel \in \boldh^1_{\per}$. Restricting $\uvel$ to $\boldc^{\infty}_{\textrm{c}}$ and exploiting \eqref{eq:q_weak_form} reduces this to $\int_{\Omega} p \grad \cdot \overline{\uvel} = 0$. Integrating by parts, we see that $\int_{\Omega} \grad p \cdot \overline{\uvel} $ vanishes for arbitrary $\uvel \in \boldc^{\infty}_{\textrm{c}}$. Since this is a dense subset of $\boldl^2$, $\grad p = 0$ and $q$ satisfies \eqref{eq:stat_lin}. All that remains is to show that \eqref{eq:lin_prob_freesurf}, the free surface boundary condition, is also satisfied. From \eqref{eq:strong_check} we now immediately obtain
\begin{equation} \int_{S_F} \mathbf{S} (\vel , q) \cdot \overline{\uvel}=0
\end{equation}
for all $\uvel \in \boldp^1$. Following the lead of \cite{solo1}, we localize to a neighborhood $\Sigma \subset S_F$ and construct $\uvel \in \boldp^1$ such that $\uvel |_{S_F} = ( \mathbf{S}(\vel, q) - (\mathbf{S}(\vel, q) \cdot \mathbf{n}) \mathbf{n} ) \phi$ where $\phi$ is a smooth nonnegative function vanishing outside $\Sigma$. Then
\begin{align}
\int_{S_F} \mathbf{S} (\vel , q) \cdot \overline{\uvel} &= \int_{\Sigma} | \mathbf{S}(\vel, q) - (\mathbf{S}(\vel, q) \cdot \mathbf{n}) \mathbf{n} |^2 \overline{\phi} + (\mathbf{S}(\vel, q) \cdot \mathbf{n}) \mathbf{n} \cdot \overline{( \mathbf{S}(\vel, q) - (\mathbf{S}(\vel, q) \cdot \mathbf{n}) \mathbf{n} ) \phi}\\
&= \int_{\Sigma} | \mathbf{S}_{\textrm{tan}}(\vel) |^2 \overline{\phi} \\
 &=0
\end{align}
implies that $\mathbf{S}_{\textrm{tan}}(\vel) = 0$ on $\Sigma$. Since $\Sigma$ was chosen arbitrarily, we obtain $\mathbf{S}(\vel, q) = (\mathbf{S}(\vel, q) \cdot \mathbf{n}) \mathbf{n} $ on $S_F$. Let $\theta(\vel,q) = q - 2 \mu \rad^{-2} \sum a_i a_j D_j v_i \in H^1_{\per}$. Since $\theta(\vel,q) |_{S_F} = \mathbf{S}(\vel, q) \cdot \mathbf{n}$, \eqref{eq:strong_check} yields
\begin{equation}\int_{\partial \Omega} \theta (\vel , q) \mathbf{n} \cdot \overline{\uvel}=\int_{\Omega}\grad \theta (\vel , q) \cdot \overline{\uvel}=0
\end{equation}
for all $\uvel \in \boldp^1$. By density, $\grad \theta(\vel,q) \in (\boldp^0)^{\perp}$ and $\theta(\vel,q) = p + \omega$ for some $p \in {}^0 H^1_{\per}$ and $\omega \in \real$. Since this implies $\mathbf{S}(\vel, q) \cdot \mathbf{n} = q - 2 \mu \rad^{-2} \sum a_i a_j D_j v_i = \omega$ on $S_F$, we take $q^* = q - \omega$ and obtain a unique strong solution $(\vel,q^*) \in \boldv^2 \times H^1_{\per}$ of the problem \eqref{eq:lin_prob_div}, \eqref{eq:lin_prob_freesurf}, \eqref{eq:lin_prob_per}, \eqref{eq:stat_lin}. 

To further increase regularity, we turn to the standard \emph{a priori} estimates of Agmon, Douglis, and Nirenberg (ADN) \cite{ADN}. Here it is useful to work on $\tor$ rather than $\Omega$. The associated problem is readily seen to be uniformly elliptic in the sense of ADN with boundary conditions satisfying the complementing condition. The \emph{a priori} estimates in $\tor$ then lead to the following estimate in $\Omega$:
\begin{equation}
\| q^* \|_{H^{s-1}_{\per}} + \sum_{j=1}^3 \| v_j \|_{H^s_{\per}} \leq C_{\lambda} \sum_{j=1}^3 \| g_j \|_{H^{s-2}_{\per}} \label{eq:soln_ineq}
\end{equation}
for a positive constant $C_\lambda$ which depends on $\lambda$. 
Thus $(\vel,q^*) \in \boldv^s \times H^{s-1}_{\per}$ is the unique solution of \eqref{eq:lin_prob_div}, \eqref{eq:lin_prob_freesurf}, \eqref{eq:lin_prob_per}, \eqref{eq:stat_lin} and $\sigma(A) \subset \{ \lambda \in \mathbb{C} : \mbox{Re} (\lambda) \geq 0 \}$. Now all that remains is to show that the resolvent estimate \eqref{eq:resolve_ineq} is satisfied. From \eqref{eq:soln_ineq} we obtain the estimate
\begin{equation}
\| \vel \|^2_{\boldh^s_{\per}} \leq \left( \sum_{j=1}^3 \| v_j \|_{H^s_{\per}} \right)^2 \leq 3 C^2_{\lambda} \| \bodyg \|^2_{\boldh_{\per}^{s-2}}=
3 C^2_{\lambda} \|(A - \lambda I)\vel \|^2_{\boldh_{\per}^{s-2}}.
\end{equation}
Thus we have
\begin{align}
\| \vel \|_{\boldh^s_{\per}} & \leq c_1 \| (A+I)\vel \|_{\boldh_{\per}^{s-2}} \\
& \leq c_1 \left( \| (A - \lambda I)\vel \|_{\boldh_{\per}^{s-2}} + ( |\lambda |+1) \| \vel \|_{\boldh_{\per}^{s-2}} \right) \\
& \leq c_2 \left( \| \bodyg \|_{\boldh_{\per}^{s-2}} + ( |\lambda |+1) \| \vel \|^{(s-2)/s}_{\boldh_{\per}^{s}} \| \vel \|^{2/s}_{\boldl^2} \right), \label{eq:lin_soln_est1}
\end{align}
where $c_1$ and $c_2$ are positive constants which do not depend on $\lambda$. Here we have used complex interpolation between $\boldl^2$ and $\boldh^s_{\per}$. Finally, we apply H\"{o}lder to \eqref{eq:sequi_est} which yields 
\begin{equation}
\| \bodyg \|_{\boldl^2}  \geq \frac{| \lambda |}{\sqrt{2}} \| \vel \|_{\boldl^2}. \label{eq:resolve_ineq2}
\end{equation}
Now let us restrict ourselves to $| \lambda | > \eps$ for arbitrary $\eps >0$. If $s=2$, then \eqref{eq:lin_soln_est1} and \eqref{eq:resolve_ineq2} yield \eqref{eq:resolve_ineq} directly. Otherwise, we can apply Young's inequality to \eqref{eq:lin_soln_est1} obtain
\begin{align}
\| \vel \|_{\boldh^s_{\per}} & \leq c_2 \left( \| \bodyg \|_{\boldh_{\per}^{s-2}} + c_3 ( |\lambda |+1)^{s/2} \| \vel \|_{\boldl^2} + \frac{1}{2c_2} \| \vel \|_{\boldh_{\per}^{s}}  \right)\\
& \leq c_4 \left( \| \bodyg \|_{\boldh_{\per}^{s-2}} + ( |\lambda |+1)^{s/2} \| \vel \|_{\boldl^2} \right)\\
& \leq c_5 \left( \| \bodyg \|_{\boldh_{\per}^{s-2}} + ( 1 + \eps^{-1}) ( |\lambda |+1)^{(s-2)/2}  \| \bodyg \|_{\boldl^2} \right),
\end{align}
where $c_3$, $c_4$, and $c_5$ are positive constants which do not depend on $\lambda$ and $\epsilon$. Since $\vel = R(\lambda;A) \bodyg$, this completes the proof.
\end{proof}

Since $\boldh^s_{\per}$ is compactly embedded in $\boldh^{s-2}_{\per}$ for $s\geq 2$, Riesz-Schauder theory and Corollary \ref{0sol} imply the following result. It follows that the kernel of $A$ contains constants only.

\begin{cor}
 $\sigma(A)$ consists of isolated eigenvalues of finite multiplicity. Moreover, the eigenvalue $0$ of $A$ has multiplicity $1$.
\end{cor}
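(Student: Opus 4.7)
My plan is to split the corollary into two essentially independent pieces: the discreteness of $\sigma(A)$ will come from a compact-resolvent argument, and the description of the eigenspace at $0$ will be read off directly from Corollary \ref{0sol}.

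First I would fix any $\lambda_0 \in \comp$ with $|\textrm{Im}(\lambda_0)| > \textrm{Re}(\lambda_0)$, say $\lambda_0 = i$, so that Theorem \ref{thm:resolve_inv} gives $\lambda_0 \in \rho(A)$ and a bounded resolvent $R(\lambda_0; A) : \boldp^{s-2} \to \boldp^s$. Since $\Omega$ (viewed as the solid torus $\tor$) is a compact smooth manifold with Lipschitz boundary and $s \geq 2$, the Rellich-Kondrachov theorem provides a compact embedding $\boldh^s_{\per} \hookrightarrow \boldh^{s-2}_{\per}$, which restricts to the closed divergence-free subspaces to give a compact embedding $\boldp^s \hookrightarrow \boldp^{s-2}$. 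Composing, $R(\lambda_0;A)$ is a compact operator on $\boldp^{s-2}$.

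Next I would invoke Riesz-Schauder: the spectrum of the compact operator $R(\lambda_0;A)$ consists of $0$ together with a (possibly empty) sequence of non-zero eigenvalues of finite algebraic multiplicity, accumulating only at $0$. The spectral mapping $\lambda \in \sigma(A) \leftrightarrow (\lambda - \lambda_0)^{-1} \in \sigma(R(\lambda_0;A))\setminus\{0\}$, together with the matching of Riesz projectors, transfers this to the statement that $\sigma(A)$ consists of isolated eigenvalues of finite multiplicity. For the claim about $\lambda = 0$, the reformulation preceding Theorem \ref{thm:spectrum_weak} identifies $A\vel = 0$ (for $\vel \in \boldv^s$) with the existence of a $q$ so that $(\vel,q)$ solves \eqref{eq:lin_prob_div}, \eqref{eq:lin_prob_freesurf}, \eqref{eq:lin_prob_per}, \eqref{eq:stat_lin} at $\lambda = 0$ and $\bodyg = 0$; by Corollary \ref{0sol} such a $\vel$ must be constant, giving the kernel description.

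The main obstacle I foresee is the meaning of ``multiplicity $1$.'' Read literally, the constants in $\boldv^s$ form a three-dimensional complex subspace, so the geometric multiplicity is $3$ rather than $1$; the cleanest way to reconcile this with the claim is to interpret multiplicity as the order of $0$ as a pole of the resolvent, i.e.\ semisimplicity $\ker A^2 = \ker A$. That in turn reduces to showing $A\vel = \mathbf{c}$ is unsolvable for every nonzero constant $\mathbf{c}$, which I would approach by integrating the $i$-th component of \eqref{eq:stat_lin} (with $\lambda = 0$) over one axial period and exploiting the tangential free-surface condition on $S_F$ together with $a_3$-periodicity to cancel all boundary contributions; the remaining identity forces $\mathbf{c}\cdot \mathbf{e}_i |\Omega| = 0$. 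This integration-by-parts obstruction is the only substantive calculation beyond what Riesz-Schauder and Corollary \ref{0sol} already supply.
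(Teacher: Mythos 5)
Your argument for discreteness is exactly the paper's: invertibility of $A-\lambda_0 I$ on a sector (Theorem \ref{thm:resolve_inv}), the Rellich compact embedding $\boldh^s_{\per}\hookrightarrow\boldh^{s-2}_{\per}$, hence a compact resolvent, and Riesz--Schauder. That part is fine and matches the paper's one-sentence justification.

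Where you genuinely add to (and implicitly correct) the paper is the second claim. You are right that the paper's own observation --- ``the kernel of $A$ contains constants only,'' read off from Corollary \ref{0sol} --- describes a \emph{three}-dimensional eigenspace: every constant vector field lies in $\boldp^0$ (it is divergence-free and $L^2$-orthogonal to $\grad q$ for $q\in{}^0 H^1_{\per}$, since $\int_\Omega D_i q=\int_{\partial\Omega}q\,n_i$ and the $S_F$-term dies because $q|_{S_F}=0$ while the $\Gamma_0,\Gamma_\length$ terms cancel by periodicity), and it trivially satisfies $\mathbf{S}_{\mathrm{tan}}(\vel)=0$. So geometric multiplicity is $3$, and ``multiplicity $1$'' cannot mean algebraic or geometric multiplicity; the only reading under which the statement is true is yours --- the order of $0$ as a pole of the resolvent (equivalently, semisimplicity of the eigenvalue $0$).

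Your integration-by-parts argument for semisimplicity is correct and worth recording, since the paper does not justify it at all: if $A\vel=\mathbf{c}$ for constant $\mathbf{c}$ then, reconstructing the pressure, $-\mu\lap\vel+\grad q=\mathbf{c}$ with $(\vel,q)$ satisfying \eqref{eq:lin_prob_div}, \eqref{eq:lin_prob_freesurf}, \eqref{eq:lin_prob_per}. Integrating component $i$ over $\Omega$ gives $c_i|\Omega|=\int_{\partial\Omega}\bigl(q n_i-\mu D_j v_i n_j\bigr)$; the $\Gamma_0,\Gamma_\length$ contributions cancel by periodicity, and on $S_F$ the free-surface condition converts the integrand to $\mu D_i v_j n_j$, whose integral over $S_F$ equals $\int_\Omega D_i(\grad\cdot\vel)$ (again after cancelling the periodic faces) and hence vanishes. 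So $\mathbf{c}=0$, $\ker A^2=\ker A$, and $0$ is a first-order pole. The one caveat to keep in view is that even this salvages the corollary only under the pole-order reading: the algebraic multiplicity equals the geometric multiplicity $3$, so the statement as printed remains misleading, and if the downstream nonlinear argument uses ``multiplicity $1$'' as a dimension count it would need to be revisited.
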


\section{The Inhomogeneous Cauchy Problem} \label{sec:semi}
\setcounter{equation}{0}

We can now show that $-A$ is the infinitesimal generator of an analytic semigroup of contractions. This is the main step involved in constructing solutions to the linear problem \eqref{eq:lin_prob_main}--\eqref{eq:lin_prob_per}. We refer the reader to \cite{engel_nagel} for standard results in semigroup theory.

\begin{thm} \label{thm:semigroup} The operator $-A$, with domain $\boldv^2$, generates an analytic semigroup of contractions, $J(t)$, on $\boldp^0$ with $\| J(t) \| = 1$. 
\end{thm}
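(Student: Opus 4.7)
The plan is to verify that $-A$ satisfies the standard Lumer--Phillips hypotheses together with a sectorial resolvent estimate, then extract the norm equality from an explicit fixed point of $J(t)$.

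\textbf{Dissipativity.} For $\vel \in \boldv^2$, self-adjointness of the projection $P$ and $P\vel = \vel$ give $(A\vel, \vel)_{\boldl^2} = (-\mu \lap \vel + \grad Q\vel, \vel)_{\boldl^2}$. I would then invoke the integration-by-parts identity \eqref{eq:weak_form} from the proof of Theorem \ref{thm:spectrum_weak}, specialized to $\uvel = \vel$, $q = Q\vel$, and with the quadratic form at $\lambda = 0$. Three cancellations then dispose of every unwanted term: $\mathbf{S}_{\textrm{tan}}(\vel) = 0$ on $S_F$ (since $\vel \in \boldv^2$) combined with $\mathbf{S}(\vel, Q\vel) \cdot \mathbf{n} = 0$ on $S_F$ (by construction of $Q$) kills the $S_F$ boundary integral; the $a_3$-periodicity of $\vel$ and $Q\vel$ cancels the $\Gamma_0$ and $\Gamma_\length$ pieces; and $\grad \cdot \vel = 0$ annihilates $\int Q\vel \, \grad \cdot \overline{\vel}$. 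What remains is
\begin{equation*}
(A\vel, \vel)_{\boldl^2} = \frac{\mu}{2} \sum_{i,j=1}^{3} \int_{\Omega} |D_j v_i + D_i v_j|^2 \geq 0,
\end{equation*}
so $-A$ is dissipative.

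\textbf{Sectorial bound and generation.} Next I would combine the coercivity estimate \eqref{eq:sequi_est} with the weak identity $\langle \vel, \vel \rangle = (\bodyg, \vel)_{\boldl^2}$ to obtain $\|R(\lambda; A)\bodyg\|_{\boldl^2} \leq \sqrt{2}\,|\lambda|^{-1} \|\bodyg\|_{\boldl^2}$ for every $\lambda$ with $|\textrm{Im}(\lambda)| > \textrm{Re}(\lambda)$; equivalently $\|R(\mu; -A)\|_{\boldp^0 \to \boldp^0} \leq \sqrt{2}/|\mu|$ on the open sector $\Sigma = \{\mu \in \comp : |\arg \mu| < 3\pi/4\} \setminus \{0\}$, which strictly contains the closed right half-plane. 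By the standard characterization of analytic semigroup generators (e.g., \cite{engel_nagel}), this forces $-A$ to be sectorial and to generate an analytic semigroup $J(t)$ on $\boldp^0$. The range condition for Lumer--Phillips is automatic: for any $\lambda_0 > 0$, $-\lambda_0 \in \Sigma$, and Theorem \ref{thm:resolve_inv} with $s = 2$ supplies a bounded inverse of $\lambda_0 I + A : \boldv^2 \to \boldp^0$. Lumer--Phillips then gives $\|J(t)\| \leq 1$, and the matching lower bound follows since any constant vector $\mathbf{c}$ lies in $\boldv^2 \cap \ker A$ (as $\lap \mathbf{c} = 0$ and $Q\mathbf{c} = 0$ by uniqueness in Lemma \ref{lem:Q}), so $J(t)\mathbf{c} = \mathbf{c}$ and hence $\|J(t)\| \geq 1$.

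The principal obstacle is the bookkeeping in the dissipativity step: organizing the integration by parts so that every boundary term is killed either by the defining properties of $\boldv^2$, by the boundary relation satisfied by $Q\vel$, or by periodicity, leaving only the manifestly nonnegative Korn-type quadratic form. Everything else reduces to direct application of Theorem \ref{thm:resolve_inv} together with off-the-shelf semigroup generation theorems.
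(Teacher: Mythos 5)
Your proposal is correct and follows essentially the same route as the paper: dissipativity, Lumer--Phillips, analyticity from the $\boldl^2$ resolvent bound on a sector wider than a half-plane, and the norm equality from a $0$-eigenvector. The only local variation is in the dissipativity step, where you extract $(A\vel,\vel)_{\boldl^2} = \tfrac{\mu}{2}\sum_{i,j}\int_\Omega |D_j v_i + D_i v_j|^2 \geq 0$ directly from the integration-by-parts identity \eqref{eq:weak_form}; the paper reaches the same quantity through \eqref{eq:semigroup_form} (for real $\lambda<0$) and H\"older, which is the same computation in slightly different packaging. One small slip worth correcting: to verify the Lumer--Phillips range condition you need $\lambda_0 I + A$ to be surjective for some $\lambda_0>0$, i.e.\ $-\lambda_0 \in \rho(A)$, which holds since $|\mathrm{Im}(-\lambda_0)| = 0 > -\lambda_0 = \mathrm{Re}(-\lambda_0)$; but you wrote ``$-\lambda_0 \in \Sigma$,'' which is false because the negative real axis is excluded from $\Sigma$. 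You should instead write $\lambda_0 \in \Sigma$ (equivalently $-\lambda_0 \in \rho(A)$); the conclusion is unaffected.
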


\begin{proof} As we seek to apply Lumer-Phillips, we begin by showing that $-A$ is dissipative. To do this, we must improve (slightly) upon the estimate provided by \eqref{eq:resolve_ineq2}. For $\lambda < 0$, we obtain
\begin{equation}
| (\bodyg, \vel)_{\boldl^2} | = | \langle \vel, \vel \rangle | = - \lambda \| \vel \|^2_{\boldl^2} + \frac{\mu}{2} \sum_{i,j=1}^3 \int_{\Omega} | D_j v_i + D_i v_j |^2 \geq - \lambda \| \vel \|^2_{\boldl^2}. \label{eq:semigroup_form}
\end{equation}
Dissipativity now follows using the H\"{o}lder inequality. Since $A+I$ is surjective by Theorem \ref{thm:resolve_inv} and $\boldp^0$ is reflexive (as a Hilbert space), we can apply Lumer-Phillips to obtain that $\boldv^2$ is dense in $\boldp^0$ and $-A$ generates a $C_0$ semigroup of contractions, $J(t)$, on $\boldp^0$. As the generator of a $C_0$ semigroup of contractions, $-A$ is closed (see Theorem II.1.4 in \cite{engel_nagel}, for example) and using \eqref{eq:resolve_ineq2} together with Theorem 12.31 from \cite{renrog} we see that $J(t)$ is actually an analytic semigroup on $\boldp^0$. Now, since $J(t)$ is a semigroup of contractions, we have $\| J(t) \| \leq 1$. However, $0$ is contained in the point spectrum of $-A$ (see the discussion preceding Theorem \ref{thm:resolve_inv}).  Hence $\| J(t) \| = 1$ as required.
\end{proof}

With this semigroup result in hand, we are finally ready to solve the inhomogeneous linear problem \eqref{eq:lin_prob_main}--\eqref{eq:lin_prob_per}. Theorem \ref{thm:semigroup} immediately provides a solution to the Cauchy problem \eqref{eq:var_lin_prob1}--\eqref{eq:var_lin_prob2} and makes the Paley-Wiener theory utilized in \cite{beale1} unnecessary. Here we abbreviate the sets $G = (0,T) \times \Omega$ and $ \partial G_F = (0,T) \times S_F$.

\begin{thm} \label{thm:simp_lin} Let $3<s \leq 4$, $T>0$, and $\bodyf \in \boldk^{s-2}_{\per }$ such that $P\bodyf (0,\cdot)=0$. Then the problem \eqref{eq:lin_prob_main}--\eqref{eq:lin_prob_per} has a unique solution $(\vel,q)$ such that $\vel \in \boldk^s_{\per}$, $\grad q \in \boldk^{s-2}_{\per}$, and $q |_{S_F} \in K^{s-3/2}_{\per}(\partial G_F)$. Moreover, this solution satisfies
\begin{equation}
 \| \vel \|_{\boldk^s_{\per}} + \| \grad q \|_{\boldk^{s-2}_{\per}} + \| q |_{S_F} \|_{K^{s-3/2}_{\per}(\partial G_F)} \leq C \| \bodyf \|_{\boldk^{s-2}_{\per}}, \label{eq:lin_prob_est}
\end{equation}
where $C$ is a positive constant which is independent of $T$ and $\bodyf$.
\end{thm}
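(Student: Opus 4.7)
The plan is to reduce \eqref{eq:lin_prob_main}--\eqref{eq:lin_prob_per} to the abstract Cauchy problem \eqref{eq:var_lin_prob1}--\eqref{eq:var_lin_prob2}, solve it via the analytic semigroup $J(t)$ from Theorem \ref{thm:semigroup}, upgrade the velocity regularity to $\boldk^s_{\per}$ using the resolvent estimate \eqref{eq:resolve_ineq} together with a temporal Fourier argument, and then reconstruct the pressure from $\vel$ via Lemma \ref{lem:Q} and the modified Helmholtz decomposition of Proposition \ref{prop:orthog}. Applying $P$ to \eqref{eq:lin_prob_main} -- and noting that $P\bodyf\in\boldk^{s-2}_{\per}$ with $T$-independent norm by Proposition \ref{prop:P}(2) -- Duhamel's formula supplies the unique mild solution $\vel(t) = \int_0^t J(t-\tau)(P\bodyf)(\tau,\cdot)\,d\tau \in C([0,T];\boldp^0)$.

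To promote this mild solution to $\boldk^s_{\per}$, I would extend $P\bodyf$ by zero to all of $\real$. The compatibility hypothesis $P\bodyf(0,\cdot)=0$, together with $(s-2)/2 > 1/2$ (which holds because $s>3$), guarantees that the zero extension lies in $H^{(s-2)/2}(\real;\boldp^0)\cap L^2(\real;\boldp^{s-2})$ with norm bounded independently of $T$. Passing to the temporal Fourier transform then yields $\hat{\vel}(\xi,\cdot) = R(-i\xi;A)\widehat{P\bodyf}(\xi,\cdot)$ for $\xi\ne 0$, since $\lambda=-i\xi$ satisfies $|\mathrm{Im}(\lambda)|>\mathrm{Re}(\lambda)$ there. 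Squaring \eqref{eq:resolve_ineq} with a fixed $\eps>0$ and integrating in $\xi$ via Plancherel gives
\begin{equation*}
\int_{\real}\|\hat{\vel}\|_{\boldh^s_{\per}}^2\,d\xi \,\le\, C\Big(\|P\bodyf\|_{L^2(\real;\boldh^{s-2}_{\per})}^2 + \|P\bodyf\|_{H^{(s-2)/2}(\real;\boldl^2)}^2\Big) \,\le\, C\|\bodyf\|_{\boldk^{s-2}_{\per}}^2.
\end{equation*}
For the complementary $H^{s/2}(\real;\boldh^0_{\per})$-bound, I would apply \eqref{eq:resolve_ineq} with $s=2$ to get $\|\hat{\vel}\|_{\boldh^2_{\per}}\le C\|\widehat{P\bodyf}\|_{\boldl^2}$ and then exploit $i\xi\hat{\vel} = \widehat{P\bodyf}-A\hat{\vel}$ to conclude $(1+|\xi|)^2\|\hat{\vel}\|_{\boldl^2}^2\le C\|\widehat{P\bodyf}\|_{\boldl^2}^2$, integrating after multiplying by $(1+|\xi|)^{s-2}$. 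Restriction back to $(0,T)$ yields $\vel\in\boldk^s_{\per}$ with the desired $T$-independent estimate.

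For the pressure I would mimic the construction from the proof of Theorem \ref{thm:resolve_inv}: the constructed $\vel$ satisfies $D_t\vel - \mu P\lap\vel + \grad Q\vel = P\bodyf$, so setting $q = Q\vel + q_0$, with $q_0(t,\cdot)\in{}^0H^1_{\per}$ the Helmholtz potential of $(I-P)(\bodyf+\mu\lap\vel)$ provided pointwise in $t$ by Proposition \ref{prop:orthog}, recovers $D_t\vel - \mu\lap\vel + \grad q = \bodyf$, while the normal vanishing of $\mathbf{S}(\vel,Q\vel)$ built into Lemma \ref{lem:Q} produces \eqref{eq:lin_prob_freesurf}. The bound $\|\grad q\|_{\boldk^{s-2}_{\per}}\le C\|\bodyf\|_{\boldk^{s-2}_{\per}}$ is then immediate from reading $\grad q = \bodyf - D_t\vel + \mu\lap\vel$, and the trace bound $\|q|_{S_F}\|_{K^{s-3/2}_{\per}(\partial G_F)}\le C\|\bodyf\|_{\boldk^{s-2}_{\per}}$ follows from Lemma \ref{lem:Q} (which identifies $Q\vel|_{S_F}$ as a tangential combination of first-order derivatives of $\vel$) together with the standard trace theorem and the $\boldk^s_{\per}$-regularity of $\vel$. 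Uniqueness at each stage is inherited from the semigroup uniqueness of the Cauchy problem together with the determination of $q$ up to an additive constant, which is fixed by \eqref{eq:lin_prob_freesurf}.

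The hard part will be the maximal-regularity translation: \eqref{eq:resolve_ineq} only controls $\|\hat{\vel}\|_{\boldh^s_{\per}}$ by a $(|\lambda|+1)^{(s-2)/2}$-weighted $\boldl^2$-norm of the datum, and I need Plancherel to convert this cleanly into the mixed space-time norm $\boldk^{s-2}_{\per}$ with a constant independent of $T$. Three ingredients interlock to make this work: the compatibility $P\bodyf(0,\cdot)=0$ paired with $3<s\le 4$ (without which the zero extension fails to be in $H^{(s-2)/2}$ in time), the $T$-independent boundedness of $P$ on $\boldk^{s-2}_{\per}$ from Proposition \ref{prop:P}(2), and the uniformity of \eqref{eq:resolve_ineq} in $\lambda$ on the imaginary axis away from the origin. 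The residual measure-zero degeneracy at $\xi=0$ caused by $0\in\sigma(A)$ is absorbed harmlessly into the $L^2$-in-$\xi$ integration.
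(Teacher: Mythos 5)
Your overall strategy tracks the paper's: reduce to the abstract Cauchy problem, get the unique solution from the analytic semigroup, pass to the temporal Fourier transform and apply the resolvent estimate \eqref{eq:resolve_ineq}, then reconstruct the pressure via Lemma \ref{lem:Q} and Proposition \ref{prop:uniquesoln}. But the central Fourier step has a genuine gap that you flag and then dismiss incorrectly. You write $\hat{\vel}(\xi) = R(-i\xi;A)\widehat{P\bodyf}(\xi)$ and claim the degeneracy at $\xi=0$ caused by $0\in\sigma(A)$ is ``absorbed harmlessly'' by the $L^2$-in-$\xi$ integration. It is not. Since $0$ is a simple isolated eigenvalue of $A$ (with eigenspace the constants), $R(\lambda;A)$ has a simple pole at $\lambda=0$, so $\|R(-i\xi;A)\widehat{P\bodyf}(\xi)\|_{\boldh^s_{\per}}$ behaves like $|\xi|^{-1}$ times the norm of the spectral-projected component of $\widehat{P\bodyf}(0)$ as $\xi\to 0$, and $|\xi|^{-2}$ is not integrable near the origin. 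The hypothesis $P\bodyf(0,\cdot)=0$ controls the trace at $t=0$, not the zero Fourier mode $\int_{\real}P\bodyf\,dt$, so you have no reason to expect the residue term to vanish. The same obstruction shows up quantitatively in \eqref{eq:resolve_ineq}: its constant carries a factor $(1+\eps^{-1})$ that blows up as $\eps\to 0$, so ``fixing $\eps$ and integrating in $\xi$'' only controls the region $|\xi|\geq\eps$ and says nothing near $\xi=0$.

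The paper's resolution, which is the missing ingredient, is to multiply the extended equation by the weight $w(t)=e^{-t}$ \emph{before} transforming; this shifts the evaluation point to $\lambda=-(1+i\xi)$, which satisfies $|\lambda|\geq 1$ uniformly in $\xi$, so \eqref{eq:resolve_ineq} applies with $\eps=1$ and a $\xi$-independent constant. This works precisely because the extended datum is supported in $\{t\geq 0\}$, making it a Laplace-transform/Paley--Wiener argument. A secondary gap: your extension of $P\bodyf$ ``by zero to all of $\real$'' handles the left endpoint via $P\bodyf(0,\cdot)=0$, but you have no control on $P\bodyf(T,\cdot)$; since $(s-2)/2>1/2$, a jump at $t=T$ destroys the $H^{(s-2)/2}(\real;\boldp^0)$-regularity. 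You need a bounded extension operator that agrees with $P\bodyf$ on $[0,T]$, vanishes for $t<0$, and has operator norm independent of $T$ --- the periodic analog of Beale's Lemma 2.2 that the paper invokes, not extension by zero. With those two corrections, the rest of your outline (the pressure construction, the trace estimate, and the uniqueness argument) agrees with the paper's proof.
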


\begin{proof} First we notice that $P\bodyf \in C^{0,(s-3)/2} ([0,T]; \boldp^0)$ by the Sobolev Embedding Theorem. Combining 
Corollary 4.3.3 and Theorem 4.3.5(iii) from \cite{pazy}, the abstract Cauchy problem
\begin{align}
\dot{\vel} + A \vel = P \bodyf \\
\vel(0,\cdot) = 0
\end{align}
has a unique strong solution $\vel \in C^{1,(s-3)/2} ([0,T]; \boldp^0)$, with $\vel(t) \in \boldv^2$ for each $t \in [0,T]$. Here we are exploiting the fact that $-A$ is the generator of an analytic semigroup on $\boldp^0$. Note that $\vel$ is a strong solution in the sense of semigroups, i.e.\,, $\vel$ is differentiable almost everywhere on $[0,T]$, with $\dot{\vel} \in L^1((0,T);\boldp^0)$, such that $\vel(0,\cdot) = 0$ and $\dot{\vel}(t) = -A \vel(t) + P \bodyf (t) $ almost everywhere on $[0,T]$. In fact, $\vel$ is a classical solution in the semigroup sense since it is continuously differentiable with respect to time.

To show that $\vel \in \boldk^s_{\per}$, we reconsider the abstract Cauchy problem (now with a new unknown variable $\tilde{\vel}$) from another perspective. We begin by applying the periodic analog of Lemma 2.2 from \cite{beale1} in order to extend $P \bodyf$ to $ \boldk^{s-2}_{\per}(\real \times \Omega)$ in such a way that the extension is bounded independent of $T$ and vanishes for $t<0$. Multiplying through the abstract Cauchy problem by the weight $w(t) = e^{-t}$ and taking Fourier transforms in $t$, we obtain
\begin{equation}
\mathcal{F}_w(\tilde{\vel})(\xi) = (A+(1+i\xi)I)^{-1}\mathcal{F}_w (P\bodyf)(\xi).
\end{equation}
Since it is clear that $\mathcal{F}_w(P \bodyf)(\xi) \in \boldp^{s-2}$, this uniquely defines $\mathcal{F}_w(\tilde{\vel})(\xi) \in \boldv^s$ by Theorem \ref{thm:resolve_inv}. Making use of the Fourier transform characterization of $H^s$-spaces for $s \in \real^+$ (e.g., see \cite{adams}) and the fact that Fourier transforms are unitary transformations, we have
\begin{align}
 \| \tilde{\vel} \|^2_{\boldk^s_{\per}(\real \times \Omega)} &\leq 2 \left( \| \tilde{\vel} \|^2_{\boldl^2(\real; H^s_{\per})} + \| \tilde{\vel} \|^2_{\boldh^{s/2}(\real; L^2)} \right)\\
 & = 2 \left( \| \mathcal{F}_w(\tilde{\vel})(\xi+1) \|^2_{\boldl^2(\real; H^s_{\per})} + \| (1 + \xi^2)^{s/4} \mathcal{F}_w(\tilde{\vel})(\xi+1) \|^2_{\boldl^2(\real; L^2)} \right)\\
  & = 2 \int_{\real} \left( \| \mathcal{F}_w(\tilde{\vel})(\xi+1) \|^2_{\boldh^s_{\per}} + (1 + \xi^2)^{s/2} \| \mathcal{F}_w(\tilde{\vel})(\xi+1) \|^2_{\boldl^2} \right) \, d \xi.
\end{align}

Applying the resolvent estimate \eqref{eq:resolve_ineq} to the first term of the integral, we obtain
\begin{align}
\| \mathcal{F}_w(\tilde{\vel})(\xi+1) \|^2_{\boldh^s_{\per}} & \leq c_1 \left( \| \mathcal{F}_w(P \bodyf)(\xi+1) \|_{\boldh^{s-2}_{\per}} + 2 (|1+i(\xi+1)|+1)^{(s-2)/2} \| \mathcal{F}_w(P \bodyf)(\xi+1) \|_{\boldl^2} \right)^2\\
& \leq c_2 \left( \| \mathcal{F}_w(P \bodyf)(\xi+1) \|^2_{\boldh^{s-2}_{\per}} + (\sqrt{1+(\xi+1)^2}+1)^{s-2} \| \mathcal{F}_w(P \bodyf)(\xi+1) \|^2_{\boldl^2} \right)\\
& \leq c_2 \left( \| \mathcal{F}_w(P \bodyf)(\xi+1) \|^2_{\boldh^{s-2}_{\per}} + \left( 3 \sqrt{1+\xi^2} \right)^{s-2} \| \mathcal{F}_w(P \bodyf)(\xi+1) \|^2_{\boldl^2} \right)\\
& \leq c_3 \left( \| \mathcal{F}_w(P \bodyf)(\xi+1) \|^2_{\boldh^{s-2}_{\per}} + \left( 1+\xi^2 \right)^{(s-2)/2} \| \mathcal{F}_w(P \bodyf)(\xi+1) \|^2_{\boldl^2} \right)
\end{align}
where $c_1$, $c_2$, and $c_3$ are positive constants which are independent of $\xi$ and $\bodyf$. Similarly, we can apply estimate \eqref{eq:resolve_ineq2} to the second term of the integral to get
\begin{align}
(1 + \xi^2)^{s/2} \| \mathcal{F}_w(\tilde{\vel})(\xi+1) \|^2_{\boldl^2} & \leq 2 (1 + \xi^2)^{s/2} |1 + i(\xi+1) |^{-2} \| \mathcal{F}_w(P \bodyf)(\xi+1) \|^2_{\boldl^2} \\
& = 2 \left( \frac{1 + \xi^2}{1+(1+\xi)^2} \right) (1 + \xi^2)^{(s-2)/2} \| \mathcal{F}_w(P \bodyf)(\xi+1) \|^2_{\boldl^2}\\
& \leq 6 (1 + \xi^2)^{(s-2)/2} \| \mathcal{F}_w(P \bodyf)(\xi+1) \|^2_{\boldl^2}.
\end{align}
Combining these estimates yields 
\begin{align}
\| \tilde{\vel} \|^2_{\boldk^s_p(\real \times \Omega)} & \leq c_4 \int_{\real} \| \mathcal{F}_w(P \bodyf)(\xi+1) \|^2_{\boldh^{s-2}_{\per}} + \left( 1+\xi^2 \right)^{(s-2)/2} \| \mathcal{F}_w(P \bodyf)(\xi+1) \|^2_{\boldl^2} \, d \xi \\
& = c_4 \left( \| \mathcal{F}_w(P \bodyf)(\xi+1) \|^2_{\boldl^2(\real; H^{s-2}_{\per})} + \| (1 + \xi^2)^{(s-2)/4} \mathcal{F}_w(P \bodyf)(\xi+1) \|^2_{\boldl^2(\real; L^2)} \right)\\
& = c_4 \left( \| P \bodyf \|^2_{\boldl^2(\real; H^{s-2}_{\per})} + \| P \bodyf \|^2_{\boldh^{(s-2)/2}(\real; L^2)} \right)\\
& \leq c_4 \| P \bodyf \|^2_{\boldk^{s-2}_{\per}(\real \times \Omega)},
\end{align}
where $c_4>0$ is a constant which is independent of $\xi$ and $\bodyf$. By uniqueness, we must have $\vel = \tilde{\vel}|_{G} \in \boldk^s_{\per}$. We now seek a suitable $q$ so that $(\vel,q)$ is the unique solution of \eqref{eq:lin_prob_main}--\eqref{eq:lin_prob_per}. For fixed $t$, this amounts to finding a unique $q \in H_{\per}^{s-1}$ such that
\begin{align}
\grad q &= \mu \lap \vel + A \vel + \bodyf - P \bodyf & & \mbox{on } \Omega\\
q &= Q \vel & & \mbox{on } S_F.
\end{align}
Since $s>3$, this is easily accomplished by taking the divergence of the first equation and applying Proposition \ref{prop:uniquesoln}. All that remains is to show that $(\vel,q )$ satisfies \eqref{eq:lin_prob_est}. To estimate $q$ we first notice that
\begin{equation}
\grad q = \mu (I-P) \lap \vel + \grad Q \vel + (I-P)\bodyf.
\end{equation}
The only term which we do not yet know how to estimate is $\grad Q \vel$. However, since $\lap Q \vel = 0$ on $\Omega$ and $Q \vel = \phi$ on $S_F$ where $\phi = 2 \mu \rad^{-2} \sum_{i,j=1}^2 a_i a_j D_j v_i \in H^{s-1}_{\per}$, it follows from Proposition \ref{prop:P}(3) that $\grad Q \vel = P( \grad \phi)$. Then by Proposition \ref{prop:P}(2),
\begin{equation}
\| \grad Q \vel \|_{\boldk^{s-2}_{\per}} = \| P( \grad \phi) \|_{\boldk^{s-2}_{\per}} \leq c_5 \| \grad \phi \|_{\boldk^{s-2}_{\per}} \leq c_6 \| \vel \|_{\boldk^s_{\per}}, 
\end{equation} 
where $c_5$ and $c_6$ are positive constants. Similarly, since $Q$ was constructed so that $q = Q \vel$ on $S_F$,
\begin{equation}
\| q|_{S_F} \|_{K^{s-3/2}_{\per}(\partial G_F)} = \| Q \vel |_{S_F} \|_{K^{s-3/2}_{\per}(\partial G_F)} \leq c_7 \| Q \vel \|_{K^{s-1}_{\per}} \leq c_8 \| \vel \|_{\boldk^s_{\per}},
\end{equation}
where $c_7$ and $c_8$ are positive constants. Thus, combining estimates, we obtain
\begin{align}
\| \vel \|_{\boldk^s_{\per}} + \| \grad q \|_{\boldk^{s-2}_{\per}} + \| q|_{S_F} \|_{K^{s-3/2}_{\per}(\partial G_F)} & \leq c_9 \left( \| \vel \|_{\boldk^s_{\per}} + \| \bodyf \|_{\boldk^{s-2}_{\per}} \right)\\
& \leq c_9 \left( \| \tilde{\vel} \|_{\boldk^s_{\per}(\real \times \Omega)} + \| \bodyf \|_{\boldk^{s-2}_{\per}} \right)\\
& \leq c_{10} \left( \| \mathbf{h} \|_{\boldk^{s-2}_{\per}(\real \times \Omega)} + \| \bodyf \|_{\boldk^{s-2}_{\per}} \right)\\
& \leq c_{11} \| \bodyf \|_{\boldk^{s-2}_{\per}}
\end{align}
where $c_9$, $c_{10}$, and $c_{11}$ are positive constants which do not depend on $\bodyf$ (or $T$).
\end{proof}

\section{Concluding Remarks}

With Theorem \ref{thm:simp_lin} in hand, it is now straightforward to follow through the fixed point approach outlined in \cite{beale1}, with minor revisions, to obtain the following local existence result. For details about these modifications we refer the reader to \cite{Ceci}.

\begin{thm} \label{thm:NLP} Suppose $3<s<\frac{7}{2}$. For any $\uvel_0 \in \boldv^{s-1}$ there exists $T>0$, depending on $\| \uvel_0 \|_{\boldh^{s-1}_{\per}}$, so that the problem \eqref{eq:lag1}--\eqref{eq:lag7} has a solution $(\vel,q)$ with $\vel \in \boldk^s_{\per}, q \in K^{s-3/2}_{\per}(\partial G_F)$, and $\grad q \in \boldk^{s-2}_{\per}$. 
\end{thm}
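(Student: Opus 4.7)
The plan is to establish Theorem \ref{thm:NLP} by a contraction-mapping argument on a small ball in $\boldk^s_{\per}$ for short times $T$, using Theorem \ref{thm:simp_lin} as the workhorse linear estimate. Writing the Lagrangian system \eqref{eq:lag1}--\eqref{eq:lag7} as
\begin{equation*}
D_t \vel - \mu\lap \vel + \grad q = \mathcal{N}_1(\vel,q,\dis) + g\mathbf{e}_3,\qquad \grad\cdot \vel = \mathcal{N}_2(\vel,\dis),\qquad \mathbf{S}(\vel,q) = \mathcal{N}_3(\vel,q,\dis)\text{ on }S_F,
\end{equation*}
together with the periodic boundary conditions and $\dis(t) = \int_0^t \vel(\tau)\,d\tau$, we see that all nonlinear contributions come from the deviation of $\Lambda = (\grad \traj)^{-1}$ from the identity, i.e., from $\dis$. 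Since $\dis(0,\cdot)=0$, each $\mathcal{N}_j$ vanishes at $t=0$, which provides the compatibility needed to feed the residuals into Theorem \ref{thm:simp_lin}.

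First I would lift the initial datum: use the analytic semigroup $J(t)$ from Theorem \ref{thm:semigroup} to produce $\vel_h(t) = J(t)\uvel_0 \in \boldk^s_{\per}$ together with its associated pressure $q_h$ obtained from $Q\vel_h$ via Lemma \ref{lem:Q} and Proposition \ref{prop:uniquesoln}, and split the unknown as $\vel = \vel_h + \tilde{\vel}$, $q = q_h + \tilde q$. Then $(\tilde \vel,\tilde q)$ solves an inhomogeneous linear Stokes problem of the form \eqref{eq:lin_prob_main}--\eqref{eq:lin_prob_per} with zero initial datum and an inhomogeneity built entirely from $\mathcal{N}_1,\mathcal{N}_2,\mathcal{N}_3$ applied to $(\vel_h+\tilde\vel, q_h+\tilde q)$ and the induced $\dis$, plus the standard reduction of the divergence and boundary inhomogeneities to a body-force term (exactly the ``slightly more general'' linear problem mentioned in the introduction, which the authors indicate reduces to the case treated here). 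I would then define the fixed-point map $\Phi: B_R \to \boldk^s_{\per}$ on the ball $B_R = \{\tilde\vel \in \boldk^s_{\per} : \tilde \vel(0,\cdot) = 0,\ \|\tilde\vel\|_{\boldk^s_{\per}} \leq R\}$ by sending $\bar\vel \in B_R$ to the solution $\tilde\vel$ of the linear problem whose right-hand sides are the $\mathcal{N}_j$ evaluated at $\vel_h + \bar\vel$ and the corresponding $\dis$.

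The main obstacle is the nonlinear mapping estimate: I must show that for $\bar\vel_1,\bar\vel_2 \in B_R$,
\begin{equation*}
\|\mathcal{N}_j(\vel_h+\bar\vel_1,\cdot) - \mathcal{N}_j(\vel_h+\bar\vel_2,\cdot)\|_{\boldk^{s-2}_{\per}} \leq \eta(T,R)\,\|\bar\vel_1 - \bar\vel_2\|_{\boldk^s_{\per}},
\end{equation*}
with $\eta(T,R)\to 0$ as $T\to 0$. This requires product and composition estimates in the anisotropic Sobolev spaces $\boldk^s_{\per}$ and $K^{s-3/2}_{\per}(\partial G_F)$, together with an estimate for $\dis = \int_0^t \vel\,d\tau$ that gains a power of $T$. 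The restriction $3 < s < 7/2$ is the key gain: Sobolev embedding in three spatial dimensions plus the half-order time regularity gives $\vel,\grad\vel \in L^\infty(G)$ and continuous trace on $S_F$, so that $\Lambda - I$ is small in $L^\infty$ and the Neumann series for $\Lambda$ converges; the upper bound $s < 7/2$ keeps the relevant trace and product estimates in the linear-in-highest-derivative range so that the nonlinearities are genuinely quadratic in the perturbation. Combining these estimates with \eqref{eq:lin_prob_est} and the $T$-independence of the constant $C$ there yields, for $R$ chosen first (comparable to $\|\uvel_0\|_{\boldh^{s-1}_{\per}}$) and then $T$ sufficiently small, both $\Phi(B_R) \subset B_R$ and the contraction property. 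The Banach fixed-point theorem then produces the unique solution $\tilde\vel \in \boldk^s_{\per}$ (with the promised regularity for $\tilde q$, $\grad\tilde q$, and $\tilde q|_{S_F}$), and setting $(\vel,q) = (\vel_h + \tilde\vel, q_h + \tilde q)$ completes the proof. The detailed book-keeping of the nonlinear estimates and the reduction of the inhomogeneous divergence/free-surface conditions to \eqref{eq:lin_prob_main}--\eqref{eq:lin_prob_per} follow \cite{beale1}, with the periodic-face adjustments already provided by Theorem \ref{thm:simp_lin} and the spaces developed in Section \ref{sec:prelim}; these are the modifications the authors defer to \cite{Ceci}.
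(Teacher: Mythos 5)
Your proposal follows essentially the same route the paper takes: the paper's ``proof'' of this theorem is just the remark that one runs Beale's fixed-point scheme on top of Theorem \ref{thm:simp_lin}, deferring the product and composition estimates in $\boldk^s_{\per}$ and the reduction of the inhomogeneous divergence and free-surface conditions to \cite{beale1} and \cite{Ceci}, which is exactly what you sketch. One detail to repair before the scheme runs: the gravity term does not vanish at $t=0$ and $P(g\,\mathbf{e}_3)=g\,\mathbf{e}_3\neq 0$ (constant fields lie in $\boldp^0$), so it cannot be counted among the residuals that satisfy the compatibility hypothesis $P\bodyf(0,\cdot)=0$ of Theorem \ref{thm:simp_lin}; absorb it into the lifted part instead --- e.g.\ pass to the freely falling frame $\vel\mapsto\vel-tg\,\mathbf{e}_3$, which is harmless since $A$ annihilates constants --- and then set up the contraction as you describe.
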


Since the same arguments can be successfully applied to yield a similar local existence result when the initial displacement is taken to be nonzero in \eqref{eq:lag7}, uniqueness of solutions can then be shown to follow in the standard way. It is also a simple matter to prove that the unique solution given by Theorem \ref{thm:NLP} is axisymmetric provided that $\uvel_0$ is taken to be axisymmetric. \cite{Ceci} contains additional details.

\end{document}